\documentclass[11pt]{article}
\usepackage{amsmath,amssymb,amsthm}
\usepackage{latexsym}
\usepackage{graphicx}
\usepackage{fancybox}
\usepackage{authblk}

\setlength{\oddsidemargin}{0mm}
\setlength{\evensidemargin}{0mm}
\setlength{\textwidth}{160mm}
\setlength{\textheight}{230mm}
\topmargin -.3in
\footskip 3em
\setlength{\parskip}{\medskipamount}

\def\Rbb{\mathbb{R}}
\def\Ebb{\mathbb{E}}
\def\1{\mbox{\bf 1}}

\newtheorem{definition}{Definition}
\newtheorem{theorem}{Theorem}
\newtheorem{lemma}{Lemma}
\newtheorem{example}{Example}

\title{Graph-based Composite Local Bregman Divergences\\ on Discrete Sample Spaces}

\author[1]{Takafumi Kanamori}
\author[2]{Takashi Takenouchi}

\affil[1]{Nagoya University}
\affil[2]{Future University Hakodate}

\date{}

\begin{document}
\maketitle


\begin{abstract}
One of the most common methods for statistical inference is the maximum likelihood estimator (MLE). 
The MLE needs to compute the normalization constant in statistical models, and it is often
intractable. Using unnormalized statistical models and replacing the likelihood with the other scoring rule 
are a good way to circumvent such high computation cost, where the scoring rule measures the goodness of fit
of the model to observed samples.  
The scoring rule is closely related to the Bregman divergence, which is a discrepancy measure between two
probability distributions. 
In this paper, the purpose is to provide a general framework of statistical inference using unnormalized
statistical models on discrete sample spaces.  
A localized version of scoring rules is important to obtain computationally efficient estimators. 
We show that the local scoring rules are related to the localized version of Bregman divergences. 
Through the localized Bregman divergence, we investigate the statistical consistency of local scoring rules. 
We show that the consistency is determined by the structure of neighborhood system defined on 
discrete sample spaces. 
In addition, we show a way of applying local scoring rules to classification problems. 
In numerical experiments, we investigated the relation between the neighborhood system and the estimation
 accuracy. 
\end{abstract}

\section{Introduction}
\label{sec:Introduction}

Our purpose is to provide a general framework of statistical inference with unnormalized 
statistical models on discrete sample spaces. 
For statistical inference, one of the most common methods is the maximum likelihood estimator (MLE), which is
obtained by maximizing the empirical mean of the log-likelihood for the statistical model. The MLE has some
nice properties such as the statistical consistency and efficiency. When the dimension of the sample domain is 
large, however, the computation of the normalization constant in the statistical model is intractable.

Several approaches have been proposed to deal with the normalization constant. 
One way is to approximate the normalization constant by means of the Monte Carlo method, which is a generic
framework to compute integrals and total sums by using random
sampling~\cite{geyer91:_markov_chain_monte_carlo_maxim_likel,hinton02:_train_produc_exper_minim_contr_diver,%
salakhutdinov08:_learn_evaluat_boltz_machin}. 
The other approach is to replace the 
log-likelihood with other scoring rules that measure the goodness of fit of the model to observed samples. 
Using scoring rules that do not depend on the normalization constant is thought to be computationally efficient.
Such scoring rules include pseudo-likelihood, composite likelihood, ratio matching,
and so forth~\cite{
AISTATS2010_AsuncionLIS10,besag74:_spatial_inter_statis_analy_lattic_system,gutmann11:_bregm,%
hyvarinen07:_connec_between_score_match_contr,hyvarinen07:_some_exten_score_match,%
liang08:_asymp_analy_gener_discr_pseud_estim,lindsay88:_compos,lyu09:_inter_gener_score_match,%
marlin11:_asymp_effic_deter_estim_discr,pihlaja10:_famil_comput_effic_simpl_estim}. 
As a whole, the locality of the scoring rule over the sample space is the key to reduce the computational cost. 
Dawid et al. argued the theoretical properties of scoring rules that can be expressed by the sum of 
localized scoring rules~\cite{dawid12:_proper_local_scorin_rules_discr_sampl_spaces}. 

In this paper, we study the statistical consistency of local scoring rules. In general, the scoring rule is
closely related to the Bregman divergence $D(p,q)$, which is a discrepancy measure between two probability 
distributions,
$p,q$~\cite{
gneiting07:_stric_proper_scorin_rules_predic_estim,hendrickson71:_proper_scores_probab_forec,%
mccarthy56:_measur_of_the_value_of_infor}. 
The Bregman divergence takes non-negative real numbers and $D(p,p)=0$ holds for any 
probability distribution. The coincidence axiom means that $D(p,q)=0$ leads to $p=q$
\cite{bregman67:_relax_method_of_findin_commonc,murata04:_infor_geomet_u_boost_bregm_diver}. 
When the Bregman divergence satisfies the coincidence axiom, the associated scoring rule will have the
statistical consistency under a mild assumption. We show that the local scoring rules are also related to 
the localized version of Bregman divergences. 
Through the localized Bregman divergence, we investigate the statistical consistency of local scoring rules. 
We show that the consistency is determined by the structure of neighborhood system defined on 
discrete sample spaces. 

The rest of the paper is organized as follows. In Section \ref{sec:Preliminaries}, we introduce basic concepts
such as scoring rules, Bregman divergences, and show some examples. Then, we show the importance of the
locality for scoring rules in order to reduce the computational cost. Section~\ref{sec:Local-Bregman-Div} is
devoted to define composite local Bregman divergences which is the key concept in our paper. 
In Section~\ref{sec:Strict-Convexity_Neighborhood-Relation}, 
We study how the neighborhood system on sample space relates to theoretical properties of composite
local Bregman divergences. On the basis of the results in
Section~\ref{sec:Strict-Convexity_Neighborhood-Relation}, we propose some new scoring rules such as an
extension of composite likelihood and a localized version of the pseudo-spherical scoring rule in
Section~\ref{sec:Examples}. Then, we investigate the relation between the neighborhood system and the
statistical consistency of existing scoring rules and some newly proposed ones. 
In Section~\ref{sec:Classification}, we use local scoring rules to classification problems.
Numerical experiments are presented in Section~\ref{sec:Simulations}. In the experiments, 
mainly we focus on investigating the relation among the neighborhood system and the estimation accuracy. 
Finally, Section \ref{sec:Conclusion} concludes the paper with discussions.

\section{Preliminaries}
\label{sec:Preliminaries}

Let us introduce a scoring rule that is a basic concept in statistical inference. The scoring rule measures a
loss suffered for inaccurate prediction. We show that scoring rules are related to convex functions and
Bregman divergences. For computationally efficient statistical inference, homogeneous scoring rules have
attracted research attention 
recently~\cite{dawid12:_proper_local_scorin_rules_discr_sampl_spaces,parry12:_proper_local_scorin_rules,hyvarinen07:_some_exten_score_match}. 
The following subsections are devoted to define some concepts to describe scoring rules and
homogeneous scoring rules over discrete sample spaces.  

Let us summarize the notations to be used throughout the paper.
Let $\Rbb$ be the set of all real numbers.
The non-negative numbers and positive numbers are denoted as
$\Rbb_{+}=\{x\in\Rbb\,|\,x\geq0\}$ and $\Rbb_{++}=\{x\in\Rbb\,|\,x>0\}$, respectively. 
A discrete sample space is denoted as $\mathcal{Y}$, and the set of functions from $\mathcal{Y}$ to $\Rbb$ is
denoted as $\Rbb^{\mathcal{Y}}$.
Likewise, the notations, $\Rbb_{+}^{\mathcal{Y}},\,\Rbb_{++}^{\mathcal{Y}}$, are used. 
An element of $\Rbb^{\mathcal{Y}}$ is expressed as $a=(a_y)_{y\in\mathcal{Y}}$
like a numerical vector having the index set $\mathcal{Y}$. 
For a subset $A\subset\mathcal{Y}$, the sub-vector $a_ A\in\Rbb^{A}$ of $a\in\Rbb^{\mathcal{Y}}$ denotes $a_A=(a_y)_{y\in{A}}$. 
The derivative of a function $\phi:\Rbb^{\mathcal{Y}}\rightarrow\Rbb$ with respect to the variable
$f_y\,(y\in\mathcal{Y})$ of $\phi(f)$ is expressed as $\partial_y{\phi}(f)$ instead of
$\frac{\partial{\phi}}{\partial{f}_y}(f)$. The indicator function is denoted as $\1[A]$ that takes $1$ if $A$ is true and $0$ otherwise.

\subsection{Scoring Rules and Bregman Divergences}

A probability function $p$ on the discrete sample space $\mathcal{Y}$ corresponds to an element in $\Rbb_{+}^{\mathcal{Y}}$ such that 
$\sum_{y\in\mathcal{Y}}p_y=1$. In this paper, we consider the probability $p=(p_y)_{y\in\mathcal{Y}}$ such that all $p_y$'s are
positive in order to avoid the difficulty concerning the boundary 
effect. The set of all non-degenerate probability functions is denoted as $\mathcal{P}\subset\Rbb_{++}^{\mathcal{Y}}$, i.e.,
$\mathcal{P}=\{p\in\Rbb_{++}^{\mathcal{Y}}\,|\,\sum_{y\in\mathcal{Y}}p_y=1\}$. 
Let $\mathcal{F}$ be $\mathcal{F}=\Rbb_{++}^{\mathcal{Y}}$ for simplicity. 


The scoring rule $S(y,q)\in\Rbb$, or the score for short, is a loss of the prediction by using the probability
function $q\in\mathcal{P}$ for a given sample $y\in\mathcal{Y}$.
Suppose that samples obeys the probability function $p\in\mathcal{P}$. Then, the expected score of $S(y,q)$ is denotes as
\begin{align*}
 S(p,q)=\sum_{y\in\mathcal{Y}}p_y S(y,q)
\end{align*}
with some abuse of notation. 
 \begin{definition}
  [Proper score
  \cite{gneiting07:_stric_proper_scorin_rules_predic_estim,hendrickson71:_proper_scores_probab_forec}]
  The score $S:\mathcal{Y}\times\mathcal{P}\rightarrow\Rbb$ is called proper when
  \begin{align}
   \label{eqn:proper-score}
   S(p,q) \geq S(p,p)
  \end{align}
  holds for all $p,q\in\mathcal{P}$. 
  It is strictly proper when the equality $S(p,q)=S(p,p)$ for $p,q\in\mathcal{P}$ means $p=q$. 
 \end{definition}
 
 Strictly proper scores are used to estimate the probability function of observed samples. 
 Suppose that i.i.d. samples $y_1,\ldots,y_n\in\mathcal{Y}$ are generated from $p$ in $\mathcal{P}$. 
 The law of large numbers guarantees that the expected score $S(p,q)$ is approximated by the empirical mean of
 the score over the observed samples. 
 The minimizer of the empirical score over a statistical model $\mathcal{Q}\subset\mathcal{P}$, i.e., the
 optimal solution of 
 \begin{align*}
  \min_{q\in\mathcal{Q}}\,\frac{1}{n}\sum_{i=1}^{n}S(y_i,q)
 \end{align*}
 is expected to provide a good estimator of $p$ if $\mathcal{Q}$ includes $p$. 
 
Here, let us define the Bregman divergence. We show that scores are closely related to Bregman divergences. 
\begin{definition}
 [Bregman divergence~\cite{bregman67:_relax_method_of_findin_commonc,gneiting07:_stric_proper_scorin_rules_predic_estim}]
 Let $\phi:\mathcal{F}\rightarrow\Rbb$ be a convex function. 
 The Bregman divergence $D_\phi:\mathcal{F}\times\mathcal{F}\rightarrow\Rbb$
 is defined as
 \begin{align*}
  D_{\phi}(f,g)=\phi(f)-\phi(g)-\sum_{y\in\mathcal{Y}}\partial_y\phi(g)(f_y-g_y), 
 \end{align*}
 for $f,g\in\mathcal{F}$. If $\phi$ is not differentiable, $(\partial_y\phi(g))_{y\in\mathcal{Y}}$ denotes a subgradient of $\phi$ at
 $g\in\mathcal{F}$. The function $\phi$ is called the potential of the Bregman divergence $D_\phi$. 
\end{definition}

 The convexity of $\phi$ guarantees the non-negativity of
 $D_\phi$~\cite{bregman67:_relax_method_of_findin_commonc}. 
 When $\phi$ is strictly convex, the equality $D_\phi(f,g)=0$ leads to $f=g$. 
 Thus, the Bregman divergence is regarded as a discrepancy measure on $\mathcal{F}$. 
 The Bregman divergence has been used in wide range of problems in statistics and machine leaning
 \cite{banerjee05:_clust_bregm,Collins_etal00,murata04:_infor_geomet_u_boost_bregm_diver}. 
 In this paper, we focus on the differentiable potential in order to avoid technical difficulties. 
 In practical problems, usually we use Bregman divergences defined from differentiable potentials. 
 
 A remarkable feature of proper scores is shown in the following
 theorem~\cite{hendrickson71:_proper_scores_probab_forec,mccarthy56:_measur_of_the_value_of_infor}.  
 A refined version of the theorem is presented by \cite{gneiting07:_stric_proper_scorin_rules_predic_estim}. 
 \begin{theorem}
  The score $S$ is proper if and only if there exists a convex function $\phi:\mathcal{F}\rightarrow\Rbb$ such
  that 
 \begin{align}
  \label{eqn:Bregman-score}
  S(y,q)=-\phi(q)-\partial_y\phi(q)+\sum_{z\in\mathcal{Y}}q_z\partial_z\phi(q)
 \end{align}
  holds for $y\in\mathcal{Y}$ and $q\in\mathcal{P}$. When $\phi$ is non differentiable, $\partial_y\phi(q)$
  denotes a subgradient of $\phi$ at $q$. 
  Moreover, $S$ is strictly proper if and only if $\phi$ is strictly convex on $\mathcal{P}$. 
 \end{theorem}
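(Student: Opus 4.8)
The plan is the classical convex-analytic argument behind this representation: the key observation is that $p\mapsto S(p,q)=\sum_{y}p_yS(y,q)$ is \emph{affine} in $p$, so properness says exactly that the ``entropy'' $p\mapsto S(p,p)$ is the pointwise infimum over $q\in\mathcal{P}$ of these affine maps, hence concave, and its negative will serve as the Bregman potential.

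For the ``if'' direction I will plug the assumed form \eqref{eqn:Bregman-score} into $S(p,q)=\sum_yp_yS(y,q)$ and use $\sum_yp_y=1$: the $y$-independent terms $-\phi(q)$ and $\sum_zq_z\partial_z\phi(q)$ pass through unchanged, and in particular $S(p,p)=-\phi(p)$. Subtracting gives $S(p,q)-S(p,p)=\phi(p)-\phi(q)-\sum_y\partial_y\phi(q)(p_y-q_y)=D_\phi(p,q)\ge0$, which is properness. If moreover $\phi$ is strictly convex on $\mathcal{P}$, then for $p\neq q$ the one-variable restriction of $\phi$ to the segment $[q,p]\subset\mathcal{P}$ lies strictly above its supporting line at $q$, so $D_\phi(p,q)>0$ and $S$ is strictly proper.

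For the ``only if'' direction I will exhibit the potential directly on $\mathcal{F}$: set $\phi(f)=\sup_{q\in\mathcal{P}}\sum_{y\in\mathcal{Y}}f_y\bigl(-S(y,q)\bigr)$. As a supremum of linear functions it is convex; it is positively homogeneous of degree one and equals $-\|f\|_1\,S\bigl(\hat f,\hat f\bigr)$ with $\hat f=f/\|f\|_1$, hence finite on $\mathcal{F}$. Properness, in the form $S(q,q')\ge S(q,q)$ for all $q'$, shows the supremum defining $\phi(q)$ is attained at $q'=q$, so $\phi(q)=-S(q,q)$ on $\mathcal{P}$; the same fact shows $g=\bigl(-S(y,q)\bigr)_{y}$ satisfies $\sum_yq_yg_y=\phi(q)$ and $\sum_yf_yg_y\le\phi(f)$ for every $f\in\mathcal{F}$, i.e. $g$ is a subgradient of $\phi$ at $q$ (restricted to $\mathcal{P}$, this inequality is nothing but $S(p,q)\ge S(p,p)$). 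Choosing $\partial_y\phi(q)=g_y$, the computation dual to the ``if'' part gives $-\phi(q)-\partial_y\phi(q)+\sum_zq_z\partial_z\phi(q)=S(q,q)+S(y,q)-S(q,q)=S(y,q)$, which is \eqref{eqn:Bregman-score}. For the strict case, \eqref{eqn:Bregman-score} itself forces $\phi(q)=-S(q,q)$ for any representing $\phi$, so ``$\phi$ strictly convex on $\mathcal{P}$'' is unambiguous; and if $S$ were strictly proper while $\phi$ failed strict convexity along some nondegenerate segment of $\mathcal{P}$, then $\phi$ would be affine there, whence $D_\phi(p,q)=S(p,q)-S(p,p)=0$ for two distinct points $p\neq q$ on it --- contradicting strict properness.

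The one delicate point, and the part I expect to need the most care, is the interaction between the domain and the (sub)gradients: \eqref{eqn:Bregman-score} is required only on $\mathcal{P}$, which is a relatively open slice of $\mathcal{F}$ on which a subgradient is determined only modulo $\1=(1,\dots,1)$, so one must verify that the specific choice $\bigl(-S(y,\cdot)\bigr)_y$ is a genuine subgradient of the \emph{ambient} function $\phi$ on all of $\mathcal{F}$, not merely within $\{\,f:\sum_yf_y=1\,\}$. The homogeneous sup-representation of $\phi$ above makes this transparent via the Euler relation $\sum_yq_y\partial_y\phi(q)=\phi(q)$; everything else is the affine-in-$p$ bookkeeping together with the standard fact that a convex function agreeing with a chord at an interior point is affine along that chord.
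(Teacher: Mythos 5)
Your proof is correct, and there is in fact nothing in the paper to compare it against: the theorem is stated as a classical result (McCarthy; Hendrickson--Buehler; Gneiting--Raftery) with no proof given, and what you present is the standard supporting-hyperplane argument from those references, including a correct treatment of the one genuinely delicate point --- that $g=(-S(y,q))_{y}$ is a subgradient of the $1$-homogeneous extension $\phi(f)=\sup_{q'}\sum_y f_y(-S(y,q'))$ on all of $\mathcal{F}$, not merely on the simplex, so that the Euler identity $\sum_y q_y\partial_y\phi(q)=\phi(q)$ collapses \eqref{eqn:Bregman-score} to $S(y,q)=-\partial_y\phi(q)$. The only step deserving one more word is the last converse: to get $D_\phi(p,q)=0$ from failure of strict convexity you should place $q$ in the \emph{relative interior} of the segment on which $\phi$ is affine, so that the subgradient inequality pins the directional slope from both sides; this is exactly the ``chord at an interior point'' fact you invoke, so the argument goes through.
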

 The potential $\phi(p)$ is expressed as $-S(p,p)$ on $\mathcal{P}$. 
 The proper score of the form \eqref{eqn:Bregman-score} can be defined on $\mathcal{F}$, and we have 
 \begin{align*}
  S(f,g)-S(f,f)=D_\phi(f,g),\quad f,g\in\mathcal{F}.  
 \end{align*}
 Thus, the minimization of the strictly proper score is interpreted as the minimization of
 the corresponding Bregman divergence from the observed distribution to the statistical model.


 \begin{example}[Brier score]
  The score defined as
 \begin{align*}
  S(y,q)=-2q_y+\sum_{z\in\mathcal{Y}}q_z^2,\quad q\in\mathcal{P}
 \end{align*}
  is called Brier score.
  The divergence $D_\phi(p,q)=S(p,q)-S(p,p)$ is equal to the squared Euclidean distance
  between $p$ and $q$, where the potential is given as $\phi(f)=\sum_{y\in\mathcal{Y}}f_y^2$ for
  $f\in\mathcal{F}$. 
  Since $\phi$ is strictly convex, the Brier score is strictly proper. 
 \end{example}
 
 \begin{example}[Logarithmic score and Kullback-Leibler divergence]
  For the logarithmic score 
 \begin{align*}
  S(y,q)=-\log{q_y},
 \end{align*}
 we have 
 \begin{align*}
  S(p,q)-S(p,p)=\sum_{y\in\mathcal{Y}}p_y\log\frac{p_y}{q_y}, 
 \end{align*}
 that is nothing but the Kullback-Leibler divergence between
 $p,q\in\mathcal{P}$~\cite{cover06:_elemen_of_infor_theor_wiley}. 
 The potential is the negative Shannon entropy $\phi(p)=\sum_{y\in\mathcal{Y}}p_y\log{p_y}$. 
 The domain of $\phi$ can be directly extended to $\mathcal{F}$ by defining $\phi(f)=\sum_{y\in\mathcal{Y}}f_y\log{f_y}$ for
 $f\in\mathcal{F}$, which is strictly convex on $\mathcal{F}$. Hence, the logarithmic score is strictly proper. 
 The estimator defined from the logarithmic score is the maximum likelihood estimator. 
\end{example}
 
\begin{example}[Density-power score]
 \label{example:Density-power-score}
 For a positive constant $\gamma>0$, the density-power score is defined as
 \begin{align*}
  S(y,q)=-\frac{1+\gamma}{\gamma}q_y^\gamma+\sum_{z\in\mathcal{Y}}q_z^{1+\gamma} 
 \end{align*}
 for $q\in\mathcal{P}$. The Brier score is obtained by setting $\gamma=1$. 
 The density-power score is expressed as the form of \eqref{eqn:Bregman-score}
 by using the strictly convex potential $\phi(f)=\sum_{y\in\mathcal{Y}}f_y^{1+\gamma}/\gamma$.
 Hence, the density-power score is strictly proper. 
 The corresponding Bregman divergence is
 \begin{align*}
  D_\phi(f,g)=\sum_{y\in\mathcal{Y}}
  \left(  \frac{1}{\gamma}f_y^{1+\gamma}-\frac{1+\gamma}{\gamma}f_yg_y^{\gamma}+g_y^{1+\gamma}\right),\quad
  f,g\in\mathcal{F}. 
 \end{align*}
 Since the potential is strictly convex on $\mathcal{F}$, $D_\phi(f,g)=0$ leads to $f=g$ on $\mathcal{F}$. 
 The density-power score is used for robust estimation~\cite{a.98:_robus_effic_estim_minim_densit_power_diver}. 
\end{example}
 
\begin{example}[Pseudo-spherical score]
 \label{exam:pseudo-spherical}
  For a positive constant $\gamma>0$, the pseudo-spherical score is defined as 
 \begin{align*}
  S(y,q)=-\frac{q_y^\gamma}{(\sum_{z\in\mathcal{Y}}q_z^{1+\gamma})^{\gamma/(1+\gamma)}}
 \end{align*}
 for $q\in\mathcal{P}$.
 The corresponding potential is
 $\phi(f)=(\sum_{y\in\mathcal{Y}}f_y^{1+\gamma})^{1/(1+\gamma)}$ for $f\in\mathcal{F}$, i.e., 
 $(1+\gamma)$-norm of the vector $f$. 
 Though the potential is not strictly convex on $\mathcal{F}$, it is strictly convex on $\mathcal{P}$. 
 Hence, the density-power score is strictly proper.
 The Bregman divergence for $f,g\in\mathcal{F}$ is
 \begin{align*}
  D_\phi(f,g)
  =
  \big(\sum_{y\in\mathcal{Y}}f_y^{1+\gamma}\big)^{1/(1+\gamma)}
  -\frac{\sum_{y\in\mathcal{Y}}f_yg_y^\gamma}{(\sum_{y\in\mathcal{Y}}g_y^{1+\gamma})^{\gamma/(1+\gamma)}}. 
 \end{align*}
 The inequality $D_\phi(f,g)\geq0$ is equivalent with the H\"{o}lder's inequality.
 For $f,g\in\mathcal{F}$, the equality $D_\phi(f,g)=0$ means that $f$ and $g$ are linearly dependent. 
 When $p$ and $q$ in $\mathcal{P}$ are linearly dependent, they should be the same. 
 The pseudo-spherical score is used
 for robust estimation~\cite{fujisawa08:_robus,kanamoriar:_affin_invar_diver_compos_scores_applicy}. 
\end{example}

\subsection{Homogeneous Scoring Rules and Locality}
\label{subsec:Proper_Local_Scoring_Rules}
On a large sample space $\mathcal{Y}$ such as the high dimensional binary variables $\mathcal{Y}=\{+1,-1\}^D$, 
finding the normalization constant of statistical models is often computationally intractable. 
Suppose that the statistical model $q_{\theta}=(q_{\theta,y})_{y\in\mathcal{Y}}\in\mathcal{P}$ defined as 
\begin{align}
 \label{eqn:stat-model}
 q_{\theta,y}=\frac{f_{\theta,y}}{Z_\theta},\quad
 Z_\theta=\sum_{z\in\mathcal{Y}}f_{\theta,z},\ 
\end{align}
is used to estimate the probability of observed samples, where
$f_\theta=(f_{\theta,y})_{y\in\mathcal{Y}} \in\mathcal{F}$ is an unnormalized model having the parameter $\theta$. 
The logarithmic score needs to compute $\log Z_\theta$, and the Brier score requires the normalization
constant $Z_\theta$ and $\sum_{y\in\mathcal{Y}}f_y^2$, though the summing over $\mathcal{Y}$ is
computationally prohibitive. 

In such a case, proper local homogeneous scores are useful to greatly reduce the computation cost. 
To begin with, let us define a homogeneous score as the score $S(y,f)$ defined for $y\in\mathcal{Y}$ and $f\in\mathcal{F}$
such that $S(y,\lambda{f})=S(y,f)$ holds for all $\lambda>0$. 
The score $S(y,q)$ for $q\in\mathcal{P}$ can be extended to the homogeneous score by 
\begin{align*}
 S(y,f)=S\big(y,f\big/\sum_{z\in\mathcal{Y}}f_z\big),\quad f\in\mathcal{F}. 
\end{align*}
The homogeneous score is called proper if the score in the right-hand side of the above expression is proper
on $\mathcal{P}$. 
The function $\phi:\mathcal{F}\rightarrow\Rbb$ is called 1-homogeneous if $\phi(\lambda f)=\lambda\phi(f)$
holds for all $\lambda>0$ and all $f\in\mathcal{F}$. 
Let us introduce the relation between homogeneous scores and 1-homogeneous functions. 
\begin{theorem}
 [\cite{hendrickson71:_proper_scores_probab_forec,mccarthy56:_measur_of_the_value_of_infor}]
 \label{theorem:charactrization_homogeneous-score}
 Suppose that $\phi:\mathcal{F}\rightarrow\Rbb$ is a convex and 1-homogeneous function. Let
 $\partial{\phi}$ be a subgradient of~$\phi$. 
 Define $S(y,q)$ as $-\partial_y\phi(q)$ for $q\in\mathcal{P}$.
 Then, $S$ is a proper homogeneous score, and $\phi$ is the potential of $S$. 
 Conversely, suppose that $S(y,f),\,f\in\mathcal{F}$ is a proper homogeneous score, i.e., 
 $S(y,\lambda f)=S(y,f)$ holds for $\lambda>0$ and $S(y,q)$ is proper for $q\in\mathcal{P}$. 
 Then, $\phi(f)=-\sum_{y\in\mathcal{Y}}f_yS(y,f),\,f\in\mathcal{F}$ is a 1-homogeneous and convex function and 
 $S$ is expressed as the subgradient of~$\phi$. 
\end{theorem}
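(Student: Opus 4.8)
The plan is to reduce everything to one analytic fact about $1$-homogeneous convex functions---Euler's relation---and then invoke the already-established characterization of proper scores, equation~\eqref{eqn:Bregman-score}. Concretely, I would first record the following preliminary claim: if $\phi:\mathcal{F}\to\Rbb$ is convex and $1$-homogeneous and $v=(v_y)_{y\in\mathcal{Y}}$ is \emph{any} subgradient of $\phi$ at $f\in\mathcal{F}$, then (i) $\sum_{y\in\mathcal{Y}}f_yv_y=\phi(f)$, and (ii) $v$ is also a subgradient of $\phi$ at $\lambda f$ for every $\lambda>0$. For (i), insert $g=\lambda f$ into the subgradient inequality $\phi(g)\ge\phi(f)+\sum_y v_y(g_y-f_y)$, use $\phi(\lambda f)=\lambda\phi(f)$ to obtain $(\lambda-1)\phi(f)\ge(\lambda-1)\sum_y v_yf_y$, and let $\lambda$ pass through $1$ from above and from below. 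Given (i), the subgradient inequality collapses to $\phi(g)\ge\sum_y v_yg_y$ with equality at $g=f$; equality then also holds at $g=\lambda f$, which yields (ii), and in particular the chosen subgradient field is $0$-homogeneous.

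For the first direction, fix a subgradient field $\partial\phi$ of the given convex $1$-homogeneous $\phi$ and set $S(y,q)=-\partial_y\phi(q)$ on $\mathcal{P}$, extended to $\mathcal{F}$ by homogeneity. Claim (ii) gives $S(y,\lambda f)=S(y,f)$, so $S$ is a homogeneous score. Claim (i) gives $\sum_z q_z\partial_z\phi(q)=\phi(q)$, hence $-\phi(q)-\partial_y\phi(q)+\sum_z q_z\partial_z\phi(q)=-\partial_y\phi(q)=S(y,q)$; thus $S$ has the form~\eqref{eqn:Bregman-score} and is therefore proper with potential $\phi$, and $\phi(p)=-S(p,p)$ follows from claim (i) once more. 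If one prefers not to quote~\eqref{eqn:Bregman-score}, the same computation gives $S(p,q)-S(p,p)=\phi(p)-\phi(q)-\sum_y(p_y-q_y)\partial_y\phi(q)=D_\phi(p,q)\ge0$, which is properness directly.

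For the converse, let $S(y,f)$ be a proper homogeneous score and define $\phi(f)=-\sum_{y\in\mathcal{Y}}f_yS(y,f)$. Homogeneity of $S$ gives $\phi(\lambda f)=\lambda\phi(f)$ immediately. Properness on $\mathcal{P}$ reads $\sum_y p_yS(y,q)\ge\sum_y p_yS(y,p)$ for all $p,q\in\mathcal{P}$, with equality at $q=p$, so $\phi(p)=-S(p,p)=\sup_{q\in\mathcal{P}}\big(-\sum_y p_yS(y,q)\big)$ for $p\in\mathcal{P}$. Writing an arbitrary $f\in\mathcal{F}$ as $f=\big(\sum_z f_z\big)p$ with $p\in\mathcal{P}$ and using $1$-homogeneity of both sides, this upgrades to $\phi(f)=\sup_{q\in\mathcal{P}}\sum_{y\in\mathcal{Y}}f_y\big(-S(y,q)\big)$ on all of $\mathcal{F}$. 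The right-hand side is a pointwise supremum of linear functionals of $f$ and is finite, since it equals $-\sum_y f_yS(y,f)$; hence $\phi$ is convex. For $f_0\in\mathcal{F}$ the supremum is attained at the normalization $q_0=f_0/\sum_z f_{0,z}$, so the linear functional $f\mapsto\sum_y f_y\big(-S(y,q_0)\big)$ lies below $\phi$ everywhere and equals $\phi(f_0)$ at $f_0$; therefore $\big(-S(y,q_0)\big)_{y\in\mathcal{Y}}$, which by homogeneity coincides with $\big(-S(y,f_0)\big)_{y\in\mathcal{Y}}$, is a subgradient of $\phi$ at $f_0$. That is precisely the assertion that $S$ is the negative of a subgradient field of $\phi$.

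The only place that needs genuine care is the non-differentiable case: claims (i) and (ii) must be verified for an arbitrary subgradient selection rather than for a gradient, and in the converse direction one must pass cleanly from the supremum representation over $\mathcal{P}$ to the one over $\mathcal{F}$ before reading off convexity and the subgradient. I would isolate the preliminary claim as a short lemma, after which both halves of the theorem become essentially one-line consequences of it together with~\eqref{eqn:Bregman-score}.
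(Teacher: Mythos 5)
Your proposal is correct and complete. Note that the paper itself offers no proof of this theorem: it is quoted from Hendrickson--Buehler and McCarthy with a pointer to Gneiting--Raftery, so there is no in-paper argument to compare against. Your route is the standard one and it works: the subgradient form of Euler's relation, $\sum_y f_y v_y=\phi(f)$ for any subgradient $v$ at $f$, obtained by testing the subgradient inequality at $g=\lambda f$ on both sides of $\lambda=1$, immediately collapses the general proper-score representation \eqref{eqn:Bregman-score} to $S(y,q)=-\partial_y\phi(q)$ and gives $S(p,q)-S(p,p)=D_\phi(p,q)\ge 0$; and in the converse direction the identity $\phi(f)=\sup_{q\in\mathcal{P}}\sum_y f_y\bigl(-S(y,q)\bigr)$, lifted from $\mathcal{P}$ to the cone $\mathcal{F}$ by $1$-homogeneity, exhibits $\phi$ as a finite pointwise supremum of linear functionals with the supremum attained at $q_0=f_0/\sum_z f_{0,z}$, which is exactly the statement that $\bigl(-S(y,f_0)\bigr)_y$ is a subgradient at $f_0$. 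The two points you flag as needing care are indeed the only delicate ones, and you handle both: part (ii) of your lemma shows a subgradient at $f$ remains one at $\lambda f$, so defining $S$ on $\mathcal{P}$ and extending by $0$-homogeneity is consistent with some subgradient selection on all of $\mathcal{F}$ (an arbitrary selection need not be $0$-homogeneous, but one consistent along rays always exists); and the passage from the supremum over $\mathcal{P}$ to the representation on $\mathcal{F}$ uses only that $\sum_z f_z>0$ on $\mathcal{F}=\Rbb_{++}^{\mathcal{Y}}$. Isolating the Euler-relation claim as a lemma, as you suggest, is a clean way to organize it.
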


\subsection{Neighborhood Systems of Sample Spaces}

Let us consider the computation of scores. If the score $S(y,f)$ depends on all components of
$(f_y)_{y\in\mathcal{Y}}$, the computation of the score will be intractable. 
Below we define localized scores for efficient comptuation. 

The locality of the score is determined from a neighborhood system on the sample space. 
Suppose that a neighborhood of $y$ in the sample space $\mathcal{Y}$ is defined as a subset
$n(y)\subset\mathcal{Y}$ that contains $y$.
The neighborhood $n(y)$ is regarded as the set of points that are close to $y$. 
The size of $n(y)$ is supposed to be small comparing to that of $\mathcal{Y}$. 
If the proper homogeneous score $S(y,f)$ depends only on $(f_z)_{z\in{n(y)}}$, 
the computation of $S(y,f)$ will be tractable.  
Theorem~\ref{theorem:charactrization_homogeneous-score} immediately reveals that $z\in n(y)$ denotes $y\in n(z)$, i.e.,
the neighborhood system $\{n(z)\,|\,z\in\mathcal{Y}\}$ can be expressed as the adjacents of $y$ in an
undirected graph with the vertex set $\mathcal{Y}$.
The detail is shown in \cite{dawid12:_proper_local_scorin_rules_discr_sampl_spaces}. 
When the neighborhood system of $S$ is determined by the undirected graph $G$, the score is called
$G$-local~\cite{dawid12:_proper_local_scorin_rules_discr_sampl_spaces}. 
Some examples of proper $G$-local homogeneous scores are presented in
Section~\ref{sec:Examples}. 

\section{Composite Local Bregman Divergences}
\label{sec:Local-Bregman-Div}

In this section, we define a composite local Bregman divergence that is determined from a set of localized
potentials. We show the relation between proper local homogeneous scores and composite local Bregman divergences. 

We start from the 1-homogeneous convex function $\phi:\mathcal{F}\rightarrow\Rbb$. 
Since $\phi$ is 1-homogeneous, $\phi$ is expressed as 
\begin{align*}
 \phi(f)=\sum_{y\in\mathcal{Y}}f_y\frac{\phi(f/f_y)}{|\mathcal{Y}|}, 
\end{align*}
where $f/f_y=(f_z/f_y)_{z\in\mathcal{Y}}\in\mathcal{F}$. 
The domain of the function $\phi(f/f_y)$ can be thought of $\Rbb_{++}^{\mathcal{Y}\setminus\{y\}}$, 
because the component $(f/f_y)_y=1$ can be removed. 
As a result, 1-homogeneous convex function $\phi:\mathcal{F}\rightarrow\Rbb$ can be constructed by the collection of convex functions 
defined on the domain $\Rbb_{++}^{\mathcal{Y}\setminus\{y\}},\,y\in\mathcal{Y}$, that is properly included in $\mathcal{F}$. 

Conversely, we start from a collection of convex functions in order to obtain the potential, Bregman divergence and score.
Let us define the neighborhood system on the sample space $\mathcal{Y}$ from the undirected graph  
$G=(\mathcal{Y},E)$, where $E\subset\mathcal{Y}\times\mathcal{Y}$ is the set of edges. 
Here, $(y,z)\in{E}$ and $(z,y)\in{E}$ denote the same edge in the graph. 
We assume that $E$ does not contain the loop such as $(y,y)$. 
The set of adjacents of $y\in\mathcal{Y}$ is denoted as $b(y)=\{z\in\mathcal{Y}\,|\,(y,z)\in{E}\}$ and
let $n(y)=b(y)\cup\{y\}$, where $b(y)$ does not have the component $y$. 
Suppose that a convex function $\phi_y:\Rbb_{++}^{b(y)}\rightarrow\Rbb$ is assigned to each $y\in\mathcal{Y}$. 
Then, the set of convex functions $\{\phi_y\}_{y\in\mathcal{Y}}$ produces the 1-homogeneous convex function 
\begin{align}
 \label{eqn:local-potential}
 \phi(f)=
 \sum_{y\in\mathcal{Y}}f_y\phi_y(f_{b(y)}/f_y), 
\end{align}
for $f\in\mathcal{F}$, where $f_{b(y)}/f_y$ is the sub-vector $(f_z/f_y)_{z\in{b(y)}}\in\Rbb_{++}^{b(y)}$. 
It is clear that $\phi$ is 1-homogeneous. 
The convexity of $\phi$ is confirmed from the fact that $f_y\phi_y(f_{b(y)}/f_y)$ is the perspective operation of the
convex function $\phi_y$; see Section 3.2.6 of \cite{book:Boyd+Vandenberghe:2004}. 
The potential $\phi$ in \eqref{eqn:local-potential} is referred to as the $G$-local potential,
and $\phi_y$ is called the local potential. 

The corresponding proper homogeneous score is given by 
\begin{align}
 S(y,f)
& =
 -\frac{\partial}{\partial{f_y}}\phi(f)
 =
 -\phi_y(f_{b(y)}/f_y)
 +\sum_{z\in{b(y)}}\frac{f_z}{f_y}\partial_z\phi_y(f_{b(y)}/f_y)
 -\sum_{z:y\in{b(z)}}\partial_y\phi_z(f_{b(z)}/f_z) \nonumber\\
 \label{eqn:entropyToScore}
& = 
 -\phi_y(f_{b(y)}/f_y)
 +\sum_{z\in{b(y)}} \frac{f_z}{f_y}\partial_z\phi_y(f_{b(y)}/f_y)
 -\sum_{z\in{b(y)}}\partial_y\phi_z(f_{b(z)}/f_z), 
\end{align}
where we used the equality $\{z\in\mathcal{Y}\,|\,y\in{b(z)}\}=b(y)$. 
Since the neighborhoods $b(z),\,z\in{b(y)}$ appear in $S(y,f)$, 
it is not necessarily the $G$-local score. 
Let us define the extended graph $\overline{G}=(\mathcal{Y},\overline{E})$ of $G=(\mathcal{Y},E)$ as 
\begin{align*}
 \overline{E}=E\cup\{(z,z')\in\mathcal{Y}\times\mathcal{Y}\,|\,\text{$z\neq z'$ and there exists $y\in\mathcal{Y}$
 s.t. $z,z'\in b(y)$}\}, 
\end{align*}
where $b(y)$ is the set of adjacents of the graph $G$.
Figure~\ref{fig:graph_G-barG} shows the graph $G$ and its extension $\overline{G}$. 
Eq~\eqref{eqn:entropyToScore} means that the $G$-local potential leads to $\overline{G}$-local score.
 \begin{figure}[t]
  \begin{center}
   \begin{tabular}{cc}
    \includegraphics{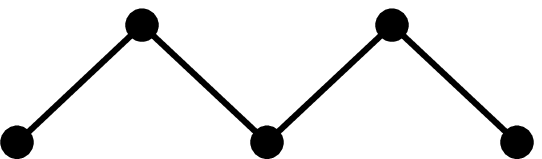} &
    \includegraphics{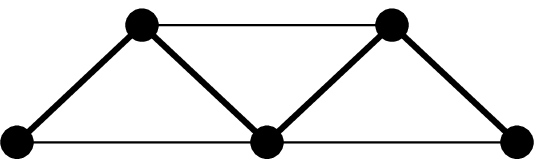}\vspace*{2mm}\\
    graph $G$ & graph $\overline{G}$ 
   \end{tabular}
  \end{center}
  \caption{Graph $G$ and its extension $\overline{G}$. }
  \label{fig:graph_G-barG}
 \end{figure}

Suppose that the convex function $\phi_y:\Rbb_{++}^{b(y)}\rightarrow\Rbb$ is expressed as 
 \begin{align}
  \label{eqn:additive-potential}
 \phi_y(f)=\sum_{z\in{b(y)}}\phi_{yz}(f_z), 
 \end{align}
where $\phi_{yz}:\Rbb_{++}\rightarrow\Rbb$ is a one-dimensional function. 
The potential of the form \eqref{eqn:additive-potential} is called additive. 
Then, the associated score is $G$-local. Indeed, the score obtained from \eqref{eqn:additive-potential} is 
\begin{align}
 \label{eqn:sep-local-score}
 S(y,f)
 = \sum_{z\in{b(y)}}
 \bigg\{ -\phi_{yz}(f_{z}/f_y)+\frac{f_{z}}{f_y}\phi_{yz}'(f_z/f_y)-\phi_{zy}'(f_{y}/f_z) \bigg\}, 
\end{align}
which depends on $f\in\mathcal{F}$ through $(f_z)_{z\in{b(y)}}$. 

The Bregman divergence $D_\phi(f,g)$ for $f,g\in\mathcal{F}$ associated with the $G$-local potential
\eqref{eqn:local-potential} defined from the collection of local potentials $\{\phi_y\}_{y\in\mathcal{Y}}$ is
expressed as  
\begin{align}
 \label{eqn:local-potential-Bregman}
 D_\phi(f,g) 
 &=
 \sum_{y\in\mathcal{Y}}f_yS(y,g)+\phi(f)
 =
 \sum_{y\in\mathcal{Y}}f_y D_{\phi_y}(f_{b(y)}/f_y,g_{b(y)}/g_y). 
\end{align}
The derivation is shown in the Appendix~\ref{appendix:deriv_potential-Bregman}.
In the derivation of the above equality, we use the formula 
\begin{align*}
 \sum_{x\in\mathcal{Y}}\sum_{y\in{b(x)}} A_{xy}=\sum_{x\in\mathcal{Y}}\sum_{y\in{b(x)}} A_{yx}
\end{align*}
for any $(A_{xy})_{x,y\in\mathcal{Y}}\in\Rbb^{|\mathcal{Y}|\times|\mathcal{Y}|}$.

 
The undirected graph determining the neighborhood system and the collection of local potentials lead to the 
Bregman divergence as the sum of localized Bregman divergences $D_{\phi_y},y\in\mathcal{Y}$. 
Here, let us define the composite local Bregman divergence. 
\begin{definition}[Composite local Bregman divergence]
 Let $\mathcal{Y}_0$ be a subset of $\mathcal{Y}$ and 
 $G=(\mathcal{Y},E)$ be an undirected graph that determines the neighborhood $b(y), y\in\mathcal{Y}$. 
 Each point $y\in\mathcal{Y}_0$ has a local potential $\phi_y:\Rbb_{++}^{b(y)}\rightarrow\Rbb$. 
 For a collection of local potentials $\Phi=\{\phi_y\,|\,y\in\mathcal{Y}_0\}$, 
 the composite local Bregman divergence is defined as 
 \begin{align}
  \label{eqn:composite-Bregman-div}
  D_\Phi(f,g)
  =
  \sum_{y\in\mathcal{Y}_0}f_y D_{\phi_y}(f_{b(y)}/f_y,\,g_{b(y)}/g_y),\quad f,g\in\mathcal{F}. 
 \end{align}
\end{definition}
In the above definition, the subset $\mathcal{Y}_0$ rather than the whole sample space $\mathcal{Y}$ is
introduced as the general expression.

The 1-homogeneous potential of $D_\Phi(f,g)$ is given by 
\begin{align*}
 \phi(f)=\sum_{y\in\mathcal{Y}_0}f_y\phi_y(f_{b(y)}/f_y). 
\end{align*}
The gradient of the above potential leads to the proper homogeneous score,
\begin{align}
\nonumber 
 & \phantom{=}S_\Phi(y,f)\\
 \label{eqn:Y0-scoring-rule}
& =\1[y\in\mathcal{Y}_0]
 \bigg\{\!-\phi_y(f_{b(y)}/f_y) + \sum_{z\in{b(y)}}\frac{f_z}{f_y}\partial_z\phi_y(f_{b(y)}/f_y)\bigg\}
 -\!\sum_{z\in{b(y)}}\!\1[z\in\mathcal{Y}_0]\,\partial_y\phi_z(f_{b(z)}/f_z)
\end{align}
for $f\in\mathcal{F}$. 
The derivation of \eqref{eqn:Y0-scoring-rule} is shown in the
Appendix~\ref{appendix:Score_Composite_Bregman_Divegence}. 
Conversely, any proper homogeneous score $S(y,f)$ is expressed by using the (sub)gradient of
a $G$-local potential with some graph $G$. A trivial expression is given by using a singleton
$\mathcal{Y}_0=\{y_0\}$ and the complete graph $G$. 

 
In the sequel sections, we investigate the condition of the graph $G$ and the collection of local potentials
$\Phi$ such that the proper homogeneous score $S_\Phi(y,f)$ becomes to be strictly proper.

\section{Coincidence Axiom and Neighborhood Systems} 
\label{sec:Strict-Convexity_Neighborhood-Relation}

In order to use the score to statistical inference, strictly proper scores are favorable rather than just proper
ones. The Bregman divergence associated with the strictly proper score satisfies the coincidence axiom
on $\mathcal{P}$, i.e., 
\begin{align*}
 D_\Phi(p,q)=0\ \Longrightarrow\ p=q\  \text{for}\  p,q\in\mathcal{P}. 
\end{align*}
Firstly, we consider a sufficient condition of the coincidence axiom when the function set 
\begin{align*}
 \Phi=\{\phi_y:\Rbb_{++}^{b(y)}\rightarrow\Rbb\,|\,y\in\mathcal{Y}_0\}
\end{align*}
includes only strictly convex functions. 
Secondly, we consider the case that $\phi_y$'s are convex but not strictly convex.
In particular, we consider the case that $\phi_y$ is the potential defined as the localized variant of the
pseudo-spherical divergence in Example~\ref{exam:pseudo-spherical}.

\subsection{Strictly Convex Local Potentials}
\label{subsec:Strictly_Convex_Local_Potentials}

Suppose that all local potentials in $\Phi=\{\phi_y\,|\,y\in\mathcal{Y}_0\}$ are strictly convex. 
We assume that the composite local Bregman divergence \eqref{eqn:composite-Bregman-div} vanishes, 
i.e., $D_\Phi(f,g)=0$ for $f,g\in\mathcal{F}$. 
Then, due to the positivity of $f_y$ for all $y\in\mathcal{Y}_0$, each term $D_{\phi_y}(f_{b(y)}/f_y,g_{b(y)}/g_y),\,y\in\mathcal{Y}_0$ 
should be zero. The strict convexity of the potential $\phi_y$ leads to 
$f_{b(y)}/f_y=g_{b(y)}/g_y$ for all $y\in\mathcal{Y}_0$. Hence, we have
\begin{align*}
 \lambda_y:=f_y/g_y=f_z/g_z
\end{align*}
for all $z\in{b}(y)$ and all $y\in\mathcal{Y}_0$.
If there exists a point $z\in{}n(y)\cap{}n(y')$ for 
some $y,y'\in\mathcal{Y}_0$, we have $f_z/g_z=\lambda_y=\lambda_{y'}$. This implies that all $f_z/g_z$'s are
the same for $z\in n(y)\cup n(y')$ if $n(y)\cap n(y')\neq\emptyset$ for $y,y'\in\mathcal{Y}_0$. 

On the basis of the above result, we give a sufficient condition that
the composite local Bregman divergence satisfies the coincidence axiom on $\mathcal{P}$. 
\begin{theorem}
 \label{theorem:strict-cvx-case}
 Suppose that the neighborhood system of the sample space $\mathcal{Y}$ is determined by the undirected graph $G=(\mathcal{Y},E)$. 
 For a subset $\mathcal{Y}_0$ of $\mathcal{Y}$, we assume 
 \begin{align*}
  \bigcup_{y\in\mathcal{Y}_0} n(y)=\mathcal{Y}
 \end{align*}
 Let us define the graph $G_0=(\mathcal{Y}_0,E_0)$ as 
\begin{align*}
 (y,y')\in E_0\,\Longleftrightarrow\, \text{$y\neq y'$ and $n(y)\cap n(y')\neq\emptyset$}. 
\end{align*}
 We assume that the graph $G_0$ is connected and all functions in $\Phi=\{\phi_y\,|\,y\in\mathcal{Y}_0\}$ are 
strictly convex. Then, the composite local Bregman divergence $D_\Phi$ satisfies the coincidence axiom on $\mathcal{P}$. 
\end{theorem}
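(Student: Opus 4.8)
The plan is to push through the chain of equalities already begun in the discussion preceding the theorem, organize the resulting ratio constraints into a statement about the graph $G_0$, and then use connectivity together with the covering hypothesis $\bigcup_{y\in\mathcal{Y}_0} n(y)=\mathcal{Y}$ to finish.

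First I would fix $p,q\in\mathcal{P}$ with $D_\Phi(p,q)=0$. By definition $D_\Phi(p,q)=\sum_{y\in\mathcal{Y}_0}p_y D_{\phi_y}(p_{b(y)}/p_y,\,q_{b(y)}/q_y)$ is a sum of nonnegative terms (each $D_{\phi_y}\geq0$ by convexity of $\phi_y$) with strictly positive coefficients $p_y>0$, so every term must vanish: $D_{\phi_y}(p_{b(y)}/p_y,\,q_{b(y)}/q_y)=0$ for all $y\in\mathcal{Y}_0$. Since $p\in\mathcal{P}\subset\Rbb_{++}^{\mathcal{Y}}$, the arguments lie in $\Rbb_{++}^{b(y)}$, so strict convexity of $\phi_y$ applies and forces $p_{b(y)}/p_y=q_{b(y)}/q_y$, i.e.\ $p_z/q_z=p_y/q_y$ for every $z\in b(y)$. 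Setting $\lambda_y:=p_y/q_y$, I obtain $p_z/q_z=\lambda_y$ for all $z\in n(y)=b(y)\cup\{y\}$ and all $y\in\mathcal{Y}_0$.

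Next I would show $\lambda_y$ is independent of $y$. If $(y,y')\in E_0$ then $n(y)\cap n(y')\neq\emptyset$; choosing any $z$ in the intersection gives $\lambda_y=p_z/q_z=\lambda_{y'}$. As $G_0$ is connected, iterating this equality along a path between any two vertices of $\mathcal{Y}_0$ yields a single constant $\lambda>0$ with $\lambda_y=\lambda$ for all $y\in\mathcal{Y}_0$. Now the covering hypothesis enters: every $z\in\mathcal{Y}$ lies in $n(y)$ for some $y\in\mathcal{Y}_0$, hence $p_z/q_z=\lambda_y=\lambda$, so $p=\lambda q$ on all of $\mathcal{Y}$. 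Finally, summing and using $\sum_{z\in\mathcal{Y}}p_z=\sum_{z\in\mathcal{Y}}q_z=1$ forces $\lambda=1$, so $p=q$, which is the coincidence axiom on $\mathcal{P}$.

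I do not expect a serious obstacle here: the only points needing care are (i) checking the domains so that strict convexity of $\phi_y$ is applicable, which is immediate from $p,q\in\mathcal{P}$, and (ii) keeping track of which graph carries which equality — the equalities \emph{within} a neighborhood come from $G$, while the equalities \emph{across} neighborhoods come from $G_0$, and it is exactly the definition $(y,y')\in E_0\iff n(y)\cap n(y')\neq\emptyset$ that glues these together. The covering condition is what transports the conclusion from $\bigcup_{y\in\mathcal{Y}_0}n(y)$ to all of $\mathcal{Y}$; without it one would only conclude that $p$ and $q$ agree up to a common scalar on that union.
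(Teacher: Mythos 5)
Your proof is correct and follows essentially the same route as the paper's: vanishing of each nonnegative term, strict convexity forcing the ratio $p_z/q_z$ to be constant on each $n(y)$, connectivity of $G_0$ propagating that constant across $\mathcal{Y}_0$, the covering hypothesis extending it to all of $\mathcal{Y}$, and normalization forcing the constant to be $1$. The only difference is presentational — you make the final normalization step explicit where the paper simply invokes linear dependence of probability vectors.
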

\begin{proof}
 Assume that $D_\Phi(p,q)=0$ for $p,q\in\mathcal{P}$. The assumption of the theorem guarantees that
 for any $z,z'\in\mathcal{Y}$, there exist $y,y'\in\mathcal{Y}_0$ satisfying $z\in n(y)$ and $z'\in n(y')$. 
 Thus, we have $p_z/q_z=p_y/q_y$ and $p_{z'}/q_{z'}=p_{y'}/q_{y'}$ as shown above. 
 Since $G_0$ is connected, 
 there is a path in $G_0$ such that 
 $(y,y_1),(y_1,y_2),\ldots,(y_k,y')\in E_0$. 
 The argument just before Theorem~\ref{theorem:strict-cvx-case} guarantees the equations, 
 \begin{align*}
  p_{z}/q_z=
  p_{y}/q_y=
  p_{y_1}/q_{y_1}=\cdots=p_{y_k}/q_{y_k}
  =
  p_{y'}/q_{y'}=p_{z'}/q_{z'}. 
 \end{align*}
 This implies the probabilities $p$ and $q$ are linearly dependent. Hence, they should be the same. 
\end{proof}
The similar argument of the above proof was presented in
\cite{hyvarinen07:_some_exten_score_match} to prove that the ratio matching score is strictly proper. 
Note that $D_\Phi(f,g)=0$ for $f,g\in\mathcal{F}$ leads to the linear dependency of $f$ and $g$. 
Even if strict convexity is locally assumed, the coincidence axiom on $\mathcal{F}$ is not guaranteed in
general. 

The following theorem shows that the connectedness of $G_0$ is closely related to that of $G$. 
 \begin{theorem}
  \label{theroem:connect_G_G0}
  If $G_0=(\mathcal{Y}_0,E_0)$ is connected, $G=(\mathcal{Y},E)$ is connected. 
  If $\mathcal{Y}_0=\mathcal{Y}$ holds, the connectedness of $G$ is equivalent with that of $G_0$. 
 \end{theorem}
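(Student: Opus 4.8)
The plan is to prove the two assertions separately, with the first (connectedness of $G_0$ forces connectedness of $G$) carrying most of the weight and the second following from a one-line edge-inclusion remark. Throughout I will use the standing hypothesis $\bigcup_{y\in\mathcal{Y}_0}n(y)=\mathcal{Y}$ that is in force since Theorem~\ref{theorem:strict-cvx-case}.

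First I would record the elementary observation that for every $y\in\mathcal{Y}$ the whole neighborhood $n(y)=b(y)\cup\{y\}$ is contained in a single connected component of $G$: each $w\in b(y)$ satisfies $(y,w)\in E$, so every element of $n(y)$ is joined to $y$ by an edge of $G$ or equals $y$, and hence any two elements of $n(y)$ lie in the same $G$-component. Now take arbitrary $z,z'\in\mathcal{Y}$; by the covering hypothesis choose $y,y'\in\mathcal{Y}_0$ with $z\in n(y)$ and $z'\in n(y')$. Since $G_0$ is connected there is a $G_0$-path $y=y_0,y_1,\dots,y_k=y'$, and by the definition of $E_0$ each consecutive pair admits a common vertex $w_i\in n(y_i)\cap n(y_{i+1})$. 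Because $w_i$ lies in both $n(y_i)$ and $n(y_{i+1})$, and each of these sits in one $G$-component, $y_i$ and $y_{i+1}$ lie in the same $G$-component; chaining over $i=0,\dots,k-1$ shows $y$ and $y'$ are in the same $G$-component, and since $z\in n(y)$ and $z'\in n(y')$ so are $z$ and $z'$. As $z,z'$ were arbitrary, $G$ is connected.

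For the second assertion assume $\mathcal{Y}_0=\mathcal{Y}$. Then the covering hypothesis is automatic (each $y$ lies in $n(y)$), the graphs $G$ and $G_0$ share the vertex set $\mathcal{Y}$, and the implication "$G_0$ connected $\Rightarrow G$ connected" is the case just proved. For the converse I would show $E\subseteq E_0$: if $(y,y')\in E$ with $y\neq y'$, then $y'\in b(y)\subseteq n(y)$ and $y'\in n(y')$, so $n(y)\cap n(y')\ni y'$ is nonempty and hence $(y,y')\in E_0$. Thus $G_0$ is obtained from $G$ by possibly adding edges on the same vertex set, so connectedness of $G$ passes to $G_0$, giving the equivalence.

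The argument is essentially bookkeeping, and the only point that needs care is that a $G_0$-path is not literally a $G$-path: consecutive vertices of a $G_0$-path need not be $G$-adjacent, so one must route through the shared neighbor $w_i$ and invoke the fact that each $n(y)$ lies inside a single $G$-component. I would also flag explicitly that the covering assumption $\bigcup_{y\in\mathcal{Y}_0}n(y)=\mathcal{Y}$ is genuinely used in the first assertion — a vertex outside every $n(y)$, $y\in\mathcal{Y}_0$, could form its own $G$-component while leaving $G_0$ connected — and that this assumption becomes vacuous once $\mathcal{Y}_0=\mathcal{Y}$.
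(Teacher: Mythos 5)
Your proof is correct and follows essentially the same route as the paper's: route a $G_0$-path through shared neighbors $w_i\in n(y_i)\cap n(y_{i+1})$ to get $G$-connectivity, and observe $E\subseteq E_0$ when $\mathcal{Y}_0=\mathcal{Y}$ for the converse. You are in fact slightly more careful than the paper, whose proof only exhibits a $G$-walk between vertices of $\mathcal{Y}_0$ and leaves implicit the use of the covering hypothesis $\bigcup_{y\in\mathcal{Y}_0}n(y)=\mathcal{Y}$ needed to reach arbitrary vertices of $\mathcal{Y}$ --- a point you rightly flag.
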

 \begin{proof}
  Let us assume that $G_0$ is connected. Then, for any $y,y'\in\mathcal{Y}_0$, there
  exists a sequence of points $y_0=y,y_1,\ldots,y_{K-1},y_{K}=y'$ such that 
  $n(y_k)\cap n(y_{k+1})\neq\emptyset$ holds for all $k=0,1,\ldots,K-1$. 
  Hence, for the point $v_k\in n(y_k)\cap n(y_{k+1})$, we have $(y_k,v_k),\,(v_k,y_{k+1})\in\,E$. 
  Since the sequence of the points $y_0=y,v_0,y_1,v_1,\ldots,y_{K-1},v_{K-1},y_K=y'$ connects $y$ and $y'$,
  we find that $G$ is connected.

  Let us prove the later part of the theorem.
  Suppose that $G$ is connected. Then, for any $y,y'\in\mathcal{Y}_0=\mathcal{Y}$, there exist
  a sequence of points $y_0=y,y_1,\ldots,y_{K-1},y_{K}=y'$ such that $(y_k,y_{k+1})\in E$ holds for all
  $k=0,1,\ldots,K-1$. Hence, we have $n(y_k)\cap n(y_{k+1})\neq\emptyset$ for $k=0,1,\ldots,K-1$, meaning that
  $G_0$ is connected.
\end{proof}





\subsection{Local Pseudo-spherical Potential}
\label{subsec:Pseudo-spherical-Potentials}


Here, let us consider the case that the convex function $\phi_y$ is not strictly convex. 
In particular, we assume that all $\phi_y$'s are the localized version of the pseudo-spherical potential 
shown in Example~\ref{exam:pseudo-spherical}. More precisely, 
the local pseudo-spherical potential $\phi_y:\Rbb_{++}^{b(y)}\rightarrow\Rbb$ is defined as 
the $(1+\gamma)$ norm on $b(y)$, i.e., 
\begin{align*}
 \phi_y(f)=\bigg(\sum_{z\in{b(y)}}f_z^{1+\gamma}\bigg)^{1/(1+\gamma)}
\end{align*}
for a positive constant $\gamma>0$. 
Since $\phi_y(\lambda f)=\lambda\phi_y(f)$ holds for $\lambda>0$, $\phi_y$ is not strictly convex. 
Let $D_{\phi_y}$ on $\Rbb_{++}^{b(y)}$ be the Bregman divergence defined from $\phi_y$. 
Suppose that $D_{\phi_y}(f,g)=0$ holds for $f,g\in\mathcal{F}$.
Then, $f$ and $g$ are linearly dependent as shown in Example~\ref{exam:pseudo-spherical}. 

Let $D_\Phi$ be the composite local Bregman divergence defined from the local pseudo-spherical potentials. 
\begin{lemma}
 \label{lemma:local-PS-div}
 Suppse that $D_\Phi(f,g)=0$ holds for $f,g\in\mathcal{F}$.
 If $b(y)\cap b(y')\neq\emptyset$ for some $y,y'\in\mathcal{Y}_0$, 
 all $f_z/g_z$'s are the same for $z\in b(y)\cup b(y')$. 
\end{lemma}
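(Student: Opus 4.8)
The plan is to mimic the argument that precedes Theorem~\ref{theorem:strict-cvx-case}, with the single change that the pseudo-spherical divergence only delivers \emph{linear dependence} of the normalized sub-vectors rather than their equality. First I would observe that every summand of the composite local Bregman divergence \eqref{eqn:composite-Bregman-div} is non-negative: each $\phi_y$ is convex, so $D_{\phi_y}\ge 0$, and the weights $f_y$ are strictly positive. Hence $D_\Phi(f,g)=0$ forces $D_{\phi_y}\bigl(f_{b(y)}/f_y,\,g_{b(y)}/g_y\bigr)=0$ for every $y\in\mathcal{Y}_0$.

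Next I would invoke the characterization of the zeros of the pseudo-spherical divergence recalled in Example~\ref{exam:pseudo-spherical}. Since $\phi_y$ is the $(1+\gamma)$-norm on $\Rbb_{++}^{b(y)}$, the vanishing of $D_{\phi_y}\bigl(f_{b(y)}/f_y,\,g_{b(y)}/g_y\bigr)$ means that the two vectors $f_{b(y)}/f_y$ and $g_{b(y)}/g_y$ are linearly dependent. Because all of their entries are positive, neither vector is zero, so there is a constant $\mu_y>0$ with $f_z/f_y=\mu_y\,g_z/g_y$ for every $z\in b(y)$; equivalently, writing $c_y:=\mu_y\,f_y/g_y>0$,
\begin{align*}
 f_z/g_z = c_y \qquad \text{for all } z\in b(y).
\end{align*}
In other words, the ratio $f_\bullet/g_\bullet$ is constant on $b(y)$. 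Note the contrast with the strictly convex case: there the stronger conclusion $f_{b(y)}/f_y=g_{b(y)}/g_y$ pins the constant down to $f_y/g_y$, so the ratio is constant on all of $n(y)$; here $c_y$ need not equal $f_y/g_y$, which is precisely why the statement of the lemma refers to $b(y)\cup b(y')$ rather than $n(y)\cup n(y')$.

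Finally, if $w\in b(y)\cap b(y')$ for some $y,y'\in\mathcal{Y}_0$, then $c_y=f_w/g_w=c_{y'}$, and therefore $f_z/g_z$ equals this common value for every $z\in b(y)\cup b(y')$, which is the assertion. I do not expect a genuine obstacle here; the only points requiring care are keeping the distinction between $b(y)$ and $n(y)$ straight — i.e.\ remembering that the center index $y$ itself is not controlled by the local condition — and checking that the proportionality constant produced by linear dependence is positive, which is automatic since the vectors involved lie in $\Rbb_{++}^{b(y)}$.
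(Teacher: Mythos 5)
Your proposal is correct and follows essentially the same route as the paper's proof: reduce $D_\Phi=0$ to the vanishing of each local pseudo-spherical divergence via positivity of $f_y$, invoke the linear-dependence characterization from Example~\ref{exam:pseudo-spherical} to get a constant ratio $f_z/g_z$ on each $b(y)$, and equate the constants through a common point of $b(y)\cap b(y')$. Your additional remarks on the positivity of the proportionality constant and the contrast with the strictly convex case are accurate but not needed beyond what the paper records.
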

In this case, $f_y/g_y=f_{y'}/g_{y'}$ is not guaranteed.  
\begin{proof}
When $D_\Phi(f,g)=0$ holds for $f,g\in\mathcal{F}$, the positivity of $f$ leads to $D_{\phi_y}(f_{b(y)}/f_y,g_{b(y)}/g_y)=0$
for all $y\in\mathcal{Y}_0$. Thus, $f_{b(y)}/f_y$ and $g_{b(y)}/g_{y}$ are linearly dependent, meaning that 
there exists $\lambda_y$ such that $f_z/g_z=\lambda_y$ for all ${z}\in b(y)$ (not $n(y)$) and all $y\in\mathcal{Y}_0$. 
For $z\in b(y)\cap b(y')$, we have $f_z/g_z=\lambda_y=\lambda_{y'}$. 
This implies that all $f_z/g_z$'s are the same for $z\in b(y)\cup b(y')$ if $b(y)\cap b(y')\neq\emptyset$ for
some $y,y'\in\mathcal{Y}_0$. 
\end{proof}
On the basis of the above Lemma, we give a sufficient condition that $D_\Phi(p,q)$ leads to $p=q$ on $\mathcal{P}$. 
\begin{theorem}
 \label{theorem:non-strict-cvx}
 Suppose that the neighborhood system of the sample space $\mathcal{Y}$ is determined by the undirected graph $G=(\mathcal{Y},E)$. 
 For a subset $\mathcal{Y}_0$ of $\mathcal{Y}$, we assume 
 \begin{align*}
  \bigcup_{y\in\mathcal{Y}_0} b(y)=\mathcal{Y}
 \end{align*}
 Let us define the graph $G_0'=(\mathcal{Y}_0,E_0')$ as 
\begin{align*}
 (y,y')\in E_0'\,\Longleftrightarrow\,
 \text{$y\neq y'$ and $b(y)\cap b(y')\neq\emptyset$}. 
\end{align*}
 We assume that the graph $G_0'$ is connected and all functions in $\Phi=\{\phi_y\,|\,y\in\mathcal{Y}_0\}$ are 
 pseudo-spherical potentials. Then, the composite local Bregman divergence $D_\Phi$ satisfies the coincidence
 axiom on $\mathcal{P}$. 
\end{theorem}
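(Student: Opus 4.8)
The plan is to mirror the argument used for Theorem~\ref{theorem:strict-cvx-case}, systematically replacing $n(\cdot)$ by $b(\cdot)$, and to close with the normalization constraint $\sum_{z}p_z=1$. First I would invoke Lemma~\ref{lemma:local-PS-div}: assuming $D_\Phi(p,q)=0$ for $p,q\in\mathcal{P}$, the strict positivity of $p$ forces $D_{\phi_y}(p_{b(y)}/p_y,\,q_{b(y)}/q_y)=0$ for every $y\in\mathcal{Y}_0$, and hence, by the linear-dependency characterization of the pseudo-spherical divergence recalled in Example~\ref{exam:pseudo-spherical}, there is a scalar $\lambda_y$ with $p_z/q_z=\lambda_y$ for all $z\in b(y)$.

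Next I would propagate these equalities along the graph $G_0'$. Fix any $y,y'\in\mathcal{Y}_0$; since $G_0'$ is connected there is a path $y=y_0,y_1,\dots,y_K=y'$ with $b(y_k)\cap b(y_{k+1})\neq\emptyset$ for each $k$. Picking $z_k\in b(y_k)\cap b(y_{k+1})$ gives $\lambda_{y_k}=p_{z_k}/q_{z_k}=\lambda_{y_{k+1}}$, so all the $\lambda_y$'s coincide with a single value $\lambda$. Then the covering hypothesis $\bigcup_{y\in\mathcal{Y}_0}b(y)=\mathcal{Y}$ yields $p_z=\lambda q_z$ for every $z\in\mathcal{Y}$, i.e. $p$ and $q$ are proportional. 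Finally, summing over $\mathcal{Y}$ and using $\sum_z p_z=\sum_z q_z=1$ forces $\lambda=1$, hence $p=q$, which is the coincidence axiom on $\mathcal{P}$.

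There is essentially no hard step: this is the non-strictly-convex analogue of Theorem~\ref{theorem:strict-cvx-case}. The only point that needs care is that the weaker conclusion of Lemma~\ref{lemma:local-PS-div} — equality of the ratios $p_z/q_z$ only on $b(y)$, not on $n(y)$ — is precisely matched by the stronger hypotheses assumed here: the covering is demanded in terms of $b(\cdot)$ rather than $n(\cdot)$, and adjacency in $G_0'$ is defined through intersection of the $b(\cdot)$'s rather than the $n(\cdot)$'s. In particular the chaining argument never compares a ratio at an index $y\in\mathcal{Y}_0$ with $y\notin b(y)$, so the remark following the lemma (that $p_y/q_y$ need not equal $p_{y'}/q_{y'}$) causes no obstruction, since every index appearing in the chain lies in some $b(y_k)$.
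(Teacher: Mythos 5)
Your proposal is correct and follows essentially the same route as the paper's proof: both reduce to Lemma~\ref{lemma:local-PS-div}, chain the ratio equalities along a path in $G_0'$ using the shared vertices in $b(y_k)\cap b(y_{k+1})$, cover $\mathcal{Y}$ by the sets $b(y)$, and conclude $\lambda=1$ from normalization. Your observation that the lemma's weaker conclusion (equality only on $b(y)$, not $n(y)$) is exactly compensated by the $b(\cdot)$-based hypotheses is the right point of care and matches the paper's remark following Theorem~\ref{theorem:non-strict-cvx}.
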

\begin{proof}
 Assume that $D_\Phi(p,q)=0$ for $p,q\in\mathcal{P}$. 
 For any $z,z'\in\mathcal{Y}$, there exist $y,y'\in\mathcal{Y}_0$ satisfying $z\in b(y)$ and $z'\in b(y')$. 
 Since $G_0'$ is connected, there is a path in $G_0'$ such that 
 $(y,y_1),(y_1,y_2),\ldots,(y_k,y')\in E_0'$. Choose a vertex $z_i\in b(y_i)$ for each
 $i=1,\ldots,k$. Lemma~\ref{lemma:local-PS-div} guarantees 
 \begin{align*}
  p_{z}/q_z
  =
  p_{z_1}/q_{z_1}
  =
  \cdots
  =
  p_{z_k}/q_{z_k}
  =
  p_{z'}/q_{z'}. 
 \end{align*}
 This implies the $p$ and $q$ are linearly dependent. For probability vectors, they should be the same. 
\end{proof} 
In Theorem~\ref{theorem:non-strict-cvx}, the neighborhood $n(y)$ in Theorem~\ref{theorem:strict-cvx-case} is
replaced with $b(y)$. 

Let us show a simple example that illustrates how the sufficient condition affects the strict convexity of 
the composite local Bregman divergence.
The sample space is given as the binary variables $\mathcal{Y}=\{\pm1\}^D$,
and the Hamming distance $d_H$ on $\mathcal{Y}$ is defined as the number of components at which the
corresponding symbols are different, i.e., 
\begin{align*}
 d_H(y,y')=|\{i\,|\,y_i\neq y_i'\}|
\end{align*}
for $y=(y_1,\dots,y_D)$ and $y'=(y_1',\dots,y_D')$.
Let us define the neighborhood system $G=(\mathcal{Y},E)$ as 
\begin{align*}
 E=\{(y,y')\in\mathcal{Y}\times\mathcal{Y}\,|\,d_H(y,y')=1\}. 
\end{align*}
For $\mathcal{Y}_0=\mathcal{Y}$, 
the pseudo-spherical penitential with this neighborhood system does not produce the strictly proper
score. For $D=2$, the probabilities
\begin{align*}
p&=(p_{(+1,+1)},p_{(-1,-1)},p_{(-1,+1)},p_{(+1,-1)})=(0.1, 0.1,0.4, 0.4),\  \text{and}\\
q&=(q_{(+1,+1)},q_{(-1,-1)},q_{(-1,+1)},q_{(+1,-1)})=(0.2, 0.2, 0.3, 0.3)
\end{align*}
satisfy $p_{(+1,+1)}/p_{(-1,-1)}=q_{(+1,+1)}/q_{(-1,-1)}$ and
$p_{(+1,-1)}/p_{(-1,+1)}=q_{(+1,-1)}/q_{(-1,+1)}$. Thus,
we have $D_\Phi(p,q)=0$ though $p\neq q$. 
If the neighborhood system of $\mathcal{Y}$ is replaced with
\begin{align*}
E=\{(y,y')\in\mathcal{Y}\times\mathcal{Y}\,|\,d_H(y,y')=1\ \text{or}\ 2\}, 
\end{align*}
the corresponding local homogeneous score is strictly proper. Indeed, when $D_\Phi(p,q)=0$ holds for two
dimensional binary variables in $\mathcal{Y}=\{\pm1\}^2$, the edge connecting $(+1,+1)$ and $(-1,-1)$ leads to
the additional constraint $p_{(+1,+1)}/p_{(-1,+1)}=q_{(+1,+1)}/q_{(-1,+1)}$, etc.
As a result, we obtain $p=q$.

\section{Examples}
\label{sec:Examples}

We show pseudo-likelihood~\cite{besag74:_spatial_inter_statis_analy_lattic_system},
composite likelihood~\cite{lindsay88:_compos}, ratio matching~\cite{hyvarinen07:_some_exten_score_match},
local density-power score and local pseudo-spherical score as examples of proper local homogeneous 
scores. Usually, the pseudo-likelihood, composite likelihood and ratio matching are defined for the
probability over the multidimensional binary vector space. In our formation, they can be defined on any
discrete sample space endowed with the graph-based neighborhood system. 

In the following examples, we assume $\mathcal{Y}_0=\mathcal{Y}$. 

\subsection{Pseudo-likelihood}
The pseudo-likelihood is a common estimation method that does not require the computation of normalization
constant~\cite{besag74:_spatial_inter_statis_analy_lattic_system}. 
Usually, the pseudo-likelihood is used for multidimensional discrete samples such as
$\mathcal{Y}=\{\pm1\}^D$. For the sample $y=(y_1,\ldots,y_D)\in\mathcal\{\pm1\}^D$, 
we define $y_{\backslash{i}}$ as the $D-1$ dimensional vector $(y_j)_{j\neq i}\in\{\pm1\}^{D-1}$. 
For the random variable $Y$ taking a value in $\{\pm1\}^D$, the sub-vector $Y_{\backslash{i}}$ is defined in
the same way. 
The pseudo-likelihood for the sample $y$ is defined as
\begin{align*}
 \prod_{i=1}^D\Pr(Y=y\,|\,Y_{\backslash{i}}=y_{\backslash{i}}), 
\end{align*}
where $\Pr(A|B)$ denotes the conditional probability of $A$ given $B$. 

When the statistical model \eqref{eqn:stat-model} on $\mathcal{Y}=\{\pm1\}^D$ is used, 
the normalization constant $Z_\theta$ is not needed to compute the pseudo-likelihood. 
The set of edges of the graph $G=(\mathcal{Y},E)$ is given as 
\begin{align*}
 E=\{(y,y')\in\mathcal{Y}\times\mathcal{Y}\,|\,d_H(y,y')=1\}, 
\end{align*}
where $d_H$ is the Hamming distance on $\mathcal{Y}$. Then, the adjacents of $y$ is 
\begin{align*}
 b(y)
 &=
 \{\bar{y}^{(i)}=(\bar{y}^{(i)}_{1},\ldots,\bar{y}^{(i)}_{D})
 \in\mathcal{Y}\,|\,i=1,\ldots,D\},\\
 \bar{y}^{(i)}_{j}
 &=
 \begin{cases}
  y_j, & j\neq i,\\
  -y_i, & j=i,
 \end{cases}\ \ (j=1,\ldots,D). 
\end{align*}
The event, $Y_{\backslash{i}}=y_{\backslash{i}}$, is the same as $Y\in \{y,\bar{y}^{(i)}\}$. 
The statistical model $q_\theta$ is defined by 
\begin{align*}
 q_\theta=\frac{f_\theta}{Z_\theta}, \quad 
 q_\theta\in\mathcal{P},\  f_\theta\in\mathcal{F},\ Z_\theta=\sum_{y\in\mathcal{Y}}f_{\theta,y}, 
\end{align*}
where $\theta$ is the parameter to specify the probability. 
The logarithm of the pseudo-likelihood is expressed as 
\begin{align*}
 \sum_{i=1}^D \log\frac{q_{\theta,y}}{q_{\theta,y}+q_{\theta,\bar{y}^{(i)}}}
 =
 -\sum_{i=1}^D \log(1+f_{\theta,\bar{y}^{(i)}}/f_{\theta,y})
 =
 -\sum_{z\in b(y)} \log(1+f_{\theta,z}/f_{\theta,y}). 
\end{align*}
Hence, the score of the pseudo likelihood is given as 
\begin{align}
 \label{eqn:pseudo-likelihood_score}
 S(y,f)=\sum_{z\in b(y)}\log(1+f_z/f_y), \quad f\in\mathcal{F}. 
\end{align}
The corresponding local potential is 
\begin{align*}
\phi_y(g)=-\sum_{z\in b(y)}\log(1+g_z),\quad g\in\Rbb_{++}^{b(y)} 
\end{align*}
which is $G$-local, additive and strictly convex on $\Rbb_{++}^{b(y)}$. 
One can confirm that the equality \eqref{eqn:sep-local-score} holds. 
The composite local Bregman divergence is 
\begin{align*}
 D_\Phi(p,q)
 =\sum_{y\in\mathcal{Y}}p_y \sum_{z\in b(y)}
 \sum_{a\in\{y,z\}}\frac{p_a}{p_y+p_z}\log\frac{p_a/(p_y+p_z)}{q_a/(q_y+q_z)},\quad p,q\in\mathcal{P}. 
\end{align*}
Theorem~\ref{theorem:strict-cvx-case} guarantees that the above composite local Bregman divergence satisfies
the coincidence axiom on $\mathcal{P}$.
The set $\mathcal{Y}_0=\mathcal{Y}$ can be replaced with 
$\mathcal{Y}_0=\{(y_1,\ldots,y_D)\in\{\pm1\}^D\,|\,\sum_{i=1}^Dy_i=\text{even}\}$, while 
keeping the score~\eqref{eqn:pseudo-likelihood_score} 
to be strictly proper.  

Even for the general discrete sample space $\mathcal{Y}$, the pseudo-likelihood defined by the score
\eqref{eqn:pseudo-likelihood_score} is strictly proper when the neighborhood system is properly defined.


\subsection{Composite Likelihood and its Extension}
\label{subsec:Composite-Conditional-Pseudo-Likelihood}


Composite likelihood (CL) was proposed as an extension of the pseudo-likelihood in order to improve the
efficiency~\cite{lindsay88:_compos}. Let the sample space be $\mathcal{Y}=\{\pm1\}^D$ and the sample is
denoted as $y=(y_1,\ldots,y_D)\in\mathcal\{\pm1\}^D$. 
For a collection of subsets $A_\ell\subset\{1,\ldots,D\},\,\ell=1,\ldots,m$, we define
$y_{{A_\ell}}=(y_i)_{i\in A_\ell}$ as the sub-vector of $y=(y_1,\ldots,y_D)\in\mathcal{Y}$. 
The CL for the sample $y$ is defined as
  \begin{align}
   \label{eqn:composite-likelihood}
  \prod_{i=1}^m\Pr(Y_{A_\ell}=y_{A_\ell}\,|\,Y_{A_\ell^c}=y_{A_\ell^c}), 
  \end{align}
where $A^c$ is the complement of the set $A$. 
The standard pseudo-likelihood is obtained by setting $A_\ell=\{\ell\}$ for $\ell=1,\ldots,D$. 
The condition $Y_{A_\ell^c}=y_{A_\ell^c}$ can be expressed as the neighborhood system on $\mathcal{Y}$. 
Let us define the graph $G=(\mathcal{Y},E)$ as 
\begin{align*}
 (y,z)\in E \Leftrightarrow \text{$y\neq z$ and there exists $\ell$ such that $y_{A_\ell^c}=z_{A_\ell^c}$. }
\end{align*}
Then, we have the following theorem.
\begin{theorem}
\label{theorem-CL-G0connected}
Suppose $\mathcal{Y}_0=\mathcal{Y}$. Then, the undirected graph $G_0=(\mathcal{Y}_0,E_0)$
determined from $G$ is connected if and only if 
 \begin{align}
  \label{eqn:connectedness_condition}
  \bigcup_{\ell=1}^m{A_\ell}=\{1,\ldots,D\}
 \end{align}
holds. 
 \end{theorem}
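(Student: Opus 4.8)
The statement is an "if and only if" about connectedness of $G_0$, so I would prove the two directions separately, and in each direction translate between paths in $G_0$ and properties of the index sets $A_\ell$. Recall that $(y,z)\in E$ precisely when $y$ and $z$ agree on some $A_\ell^c$, i.e. they differ only within coordinates belonging to a single $A_\ell$; and $(y,y')\in E_0$ when $n(y)\cap n(y')\neq\emptyset$, where $n(y)=b(y)\cup\{y\}$. The key elementary observation to establish first is a description of $n(y)$: a point $z$ lies in $b(y)$ iff $z\neq y$ and $z$ differs from $y$ only in coordinates inside some single $A_\ell$, so $n(y)=\{z : y_{A_\ell^c}=z_{A_\ell^c}\text{ for some }\ell\}$. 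Consequently $n(y)\cap n(y')\neq\emptyset$ means there is a common point $z$ and indices $\ell,\ell'$ with $z_{A_\ell^c}=y_{A_\ell^c}$ and $z_{A_{\ell'}^c}=y'_{A_{\ell'}^c}$; equivalently $y$ and $y'$ agree on $A_\ell^c\cap A_{\ell'}^c=(A_\ell\cup A_{\ell'})^c$, since once they agree there, one can build such a $z$ by copying $y$ on $A_\ell^c$ and $y'$ on $A_{\ell'}$ — I would spell this equivalence out carefully as a lemma, as it is the workhorse of both directions.

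For the "only if" direction, I would argue the contrapositive: suppose $\bigcup_\ell A_\ell\neq\{1,\dots,D\}$, so some coordinate $i$ is never in any $A_\ell$. Then every edge of $G$ preserves coordinate $i$ (since each edge flips coordinates only inside some $A_\ell$, and $i\notin A_\ell$), hence by the lemma every edge of $G_0$ also preserves coordinate $i$. Therefore two points with different $i$-th coordinate lie in different connected components of $G_0$, and since $\mathcal{Y}_0=\mathcal{Y}$ contains points of both types, $G_0$ is disconnected.

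For the "if" direction, assume $\bigcup_\ell A_\ell=\{1,\dots,D\}$ and let $y,y'\in\mathcal{Y}$ be arbitrary; I want a path in $G_0$ joining them. The natural strategy is to change $y$ into $y'$ one "block" at a time: let $D(y,y')=\{i : y_i\neq y_i'\}$ be the set of disagreeing coordinates, and induct on $|D(y,y')|$. If it is nonempty, pick any disagreeing coordinate $i$; since $i\in A_\ell$ for some $\ell$, form $y''$ by setting $y''_{A_\ell}=y'_{A_\ell}$ and $y''_{A_\ell^c}=y_{A_\ell^c}$. Then $y''$ agrees with $y$ on $A_\ell^c$, so using the lemma (with the pair $\ell,\ell$, or more carefully exhibiting a common neighbour) one checks $n(y)\cap n(y'')\neq\emptyset$, giving an edge $(y,y'')\in E_0$ — or if $y=y''$ there is nothing to do. Moreover $D(y'',y')\subsetneq D(y,y')$ because coordinate $i$ (and all of $A_\ell$) now agrees, and no previously-agreeing coordinate was disturbed. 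By induction there is a $G_0$-path from $y''$ to $y'$, and prepending the edge $(y,y'')$ finishes the path from $y$ to $y'$.

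The step I expect to be the main obstacle is the lemma characterizing the edges of $G_0$, specifically the forward construction in the "if" direction: verifying that $y$ and $y''$ — which agree on $A_\ell^c$ — actually satisfy $n(y)\cap n(y'')\neq\emptyset$. One must exhibit an explicit common element of the two neighbourhoods (not merely note that $y''\in n(y)$, which only holds when $y''\neq y$), and handle the degenerate case $y''=y$ cleanly; getting the quantifier bookkeeping right there ("there exists $\ell$ such that..." nested inside the definition of $n$) is the only genuinely fiddly part, and everything else is routine induction.
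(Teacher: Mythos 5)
Your argument is correct, and its combinatorial core is the same as the paper's: paths are built by changing coordinates inside one block $A_\ell$ at a time, and an uncovered coordinate is invariant along every edge, which forces disconnection. The difference is structural: the paper first invokes Theorem~\ref{theroem:connect_G_G0} (since $\mathcal{Y}_0=\mathcal{Y}$, connectedness of $G_0$ is equivalent to connectedness of $G$) and then runs both directions on the simpler graph $G$, where adjacency is just ``agree on some $A_\ell^c$''. You instead work directly with $G_0$, which obliges you to prove the auxiliary characterization of $E_0$ in terms of the $A_\ell$'s; that lemma is true, but note a small slip in your construction of the common neighbour: to get $z\in n(y)\cap n(y')$ from agreement of $y,y'$ on $(A_\ell\cup A_{\ell'})^c$ you should set $z=y'$ on $A_{\ell'}^c$ and $z=y$ on $A_{\ell'}$ (or equivalently $z=y$ on $A_\ell^c$, $z=y'$ on $A_{\ell'}^c\cap A_\ell$, arbitrary on $A_\ell\cap A_{\ell'}$), not ``$y'$ on $A_{\ell'}$'', which need not be consistent with ``$y$ on $A_\ell^c$'' on the overlap $A_\ell^c\cap A_{\ell'}$. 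Also, in your inductive step the degenerate case $y''=y$ cannot occur, since $y''_i=y_i'\neq y_i$; and strictly speaking only the two one-directional implications of your lemma are used, so you could shorten it. Your route is self-contained but slightly longer; the paper's reduction to $G$ buys a cleaner edge description and reuses an already-proved equivalence.
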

 \begin{proof}
  Theorem~\ref{theroem:connect_G_G0} guarantees that the connectedness of $G_0$ is equivalent with that of $G$
  when $\mathcal{Y}_0=\mathcal{Y}$ holds. Firstly, we assume \eqref{eqn:connectedness_condition}. 
  Let $y=(y_1,\ldots,y_D)$ and $y'=(y_1',\ldots,y_D')$ be arbitrary points in $\mathcal{Y}$, and 
  suppose $y_i \neq y_i'$.
  There exists an index set $A_\ell$ that includes $i$, meaning that the point
  $y''=(y_1,\ldots,y_{i-1},y_i',y_{i+1},\ldots,y_D)$ is an adjacents of $y$. 
  By repeating this process, we can confirm that there exists a path connecting $y$ and $y'$ in the graph $G$. 

  Conversely, let us assume that $1\not\in \bigcup_{\ell=1}^m{A_\ell}$.
  Then, we have $1\in \bigcap_{\ell=1}^m{A_\ell}^c$.
  This means that the two points, $y=(y_1,\ldots,y_D)$ and
  $y'=(y_1',\ldots,y_D')$, can be adjacents to each other only when $y_1=y_1'$.
  If $y_1\neq y_1'$, a path connecting $y$ and $y'$ does not exist. 
 \end{proof}

 We define the subset $b_\ell(y)\subset\{\pm1\}^D$ by
 \begin{align*}
  b_\ell(y)=\{z\in\{\pm1\}^D\,|\,z\neq y,\,y_{A_\ell^c}=z_{A_\ell^c}\}. 
 \end{align*}
Then, we have $b(y)=\cup_{\ell=1}^m{}b_\ell(y)$. 
The score of the CL is expressed as 
\begin{align*}
 S(y,f)
 = -\sum_{\ell=1}^m\log\frac{f_y}{f_y+\sum_{z\in b_\ell(y)}f_z}
 =  \sum_{\ell=1}^m\log\bigg(1+\sum_{z\in b_\ell(y)}f_z/f_y\bigg),\quad f\in\mathcal{F}. 
\end{align*}
The potential is obtained by $\phi(p)=-S(p,p)$ on $\mathcal{P}$.
The set of local potentials, $\Phi=\{\phi_y\,|\,y\in\mathcal{Y}\}$, is given as  
 \begin{align}
  \label{eqn:CL-local-potential}
 \phi_y(g)=-\sum_{\ell=1}^m\log\big(1+\sum_{z\in b_\ell(y)}g_z\big), \quad g\in\Rbb_{++}^{b(y)}. 
 \end{align}
The above $\phi_y$ may not be strictly convex. We show a sufficient condition such that
$\phi_y$ is strictly convex on $\Rbb_{++}^{b(y)}$. 
For the subset $b_\ell(y)\subset b(y)$, let us define the vector $\1_{b_\ell(y)}\in\Rbb_{++}^{b(y)}$
as $(\1_{b_\ell(y)})_z=1$ for $z\in b_\ell(y)$ and $(\1_{b_\ell(y)})_z=0$ for $z\in{}b(y)\setminus b_\ell(y)$. 
\begin{theorem}
 \label{theorem:CL-strict-convexity}
 The local potential \eqref{eqn:CL-local-potential} is strictly convex on $\Rbb_{++}^{b(y)}$ if 
 the rank of the $|b(y)|$ by $m$ matrix $(\1_{b_1(y)},\ldots,\1_{b_m(y)})$ is $|b(y)|$. 
\end{theorem}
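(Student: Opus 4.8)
The plan is to reduce the statement to two elementary facts: that $t\mapsto-\log(1+t)$ is strictly convex on $(-1,\infty)$, and that a finite sum of convex functions is strictly convex as soon as, along every line segment in the common domain, at least one summand is strictly convex.

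First I would set $a_\ell=\1_{b_\ell(y)}\in\Rbb_{+}^{b(y)}$ and write $\phi_y(g)=\sum_{\ell=1}^m\psi_\ell(g)$ with $\psi_\ell(g)=-\log\big(1+\langle a_\ell,g\rangle\big)$, where $\langle\cdot,\cdot\rangle$ is the standard inner product on $\Rbb^{b(y)}$. Since $g\in\Rbb_{++}^{b(y)}$ and $a_\ell$ has nonnegative entries, the argument $1+\langle a_\ell,g\rangle$ is at least $1$, so each $\psi_\ell$ is the composition of the convex function $t\mapsto-\log(1+t)$ with an affine map, hence convex; consequently $\phi_y$ is convex, which recovers the convexity already noted in \eqref{eqn:local-potential}.

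Next, to obtain strict convexity, I would fix distinct $g_0,g_1\in\Rbb_{++}^{b(y)}$, put $v=g_1-g_0\neq0$, and examine $\eta(t)=\phi_y\big((1-t)g_0+tg_1\big)$ for $t\in[0,1]$. The $\ell$-th summand becomes $t\mapsto-\log\big(c_\ell+t\langle a_\ell,v\rangle\big)$ with $c_\ell=1+\langle a_\ell,g_0\rangle>0$, whose second derivative equals $\langle a_\ell,v\rangle^2\big/\big(c_\ell+t\langle a_\ell,v\rangle\big)^2$; this is strictly positive precisely when $\langle a_\ell,v\rangle\neq0$. Hence $\eta$ is a sum of convex functions and is strictly convex provided some index $\ell$ satisfies $\langle a_\ell,v\rangle\neq0$.

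The remaining --- and really the only substantive --- point is the linear-algebraic translation of the hypothesis: the rank of the $|b(y)|\times m$ matrix $(\1_{b_1(y)},\ldots,\1_{b_m(y)})$ equals $|b(y)|$ exactly when the columns $a_1,\ldots,a_m$ span $\Rbb^{b(y)}$, so their common orthogonal complement $\{v\,|\,\langle a_\ell,v\rangle=0\ \text{for all}\ \ell\}$ equals $\{0\}$. Thus for $v\neq0$ there is an $\ell$ with $\langle a_\ell,v\rangle\neq0$, so $\eta$ is strictly convex on $[0,1]$, i.e. $\phi_y\big((1-t)g_0+tg_1\big)<(1-t)\phi_y(g_0)+t\phi_y(g_1)$ for $t\in(0,1)$. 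Since $g_0,g_1$ were arbitrary distinct points, $\phi_y$ is strictly convex on $\Rbb_{++}^{b(y)}$. I do not expect a genuine obstacle; the only care needed is bookkeeping of the domain so that every logarithm has a positive argument and the second-derivative formula is valid, which is ensured by $g>0$ and $a_\ell\geq0$.
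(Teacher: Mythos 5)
Your proposal is correct and takes essentially the same route as the paper: your second derivative $\eta''(t)=\sum_{\ell}\langle a_\ell,v\rangle^2\big/\big(c_\ell+t\langle a_\ell,v\rangle\big)^2$ along a line segment is exactly the quadratic form $v^T\nabla^2\phi_y(g)\,v$ of the Hessian $\sum_{\ell}(1+\sum_{z\in b_\ell(y)}g_z)^{-2}\,\1_{b_\ell(y)}\1_{b_\ell(y)}^T$ that the paper computes directly. In both arguments the substantive step is identical: the rank hypothesis means the columns $\1_{b_\ell(y)}$ span $\Rbb^{b(y)}$, so $\langle\1_{b_\ell(y)},v\rangle=0$ for all $\ell$ forces $v=0$, giving positive definiteness and hence strict convexity.
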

\begin{proof}
 A simple calculation yields that the Hessian matrix of $\phi_y$ is given as
 the $|b(y)|$ by $|b(y)|$ matrix
 \begin{align*}
  \nabla^2\phi_y(f)=\sum_{\ell=1}^m \frac{1}{(1+\sum_{z\in b_\ell(y)}f_z)^2}\1_{b_\ell(y)}\1_{b_\ell(y)}^T. 
 \end{align*}
 For a column vector $a\in\Rbb^{b(y)}$, we suppose that $a^T(\nabla^2\phi_y(f))a =0$. Then,we have 
 \begin{align*}
  \sum_{z\in{b_\ell(y)}}a_z=0,\quad \ell=1,\ldots,m. 
 \end{align*}
 If the condition of the theorem holds, we have $a=0\in\Rbb^{b(y)}$. 
 Hence, the Hessian matrix of $\phi_y$ is positive definite and the function $\phi_y$ is strictly convex. 
\end{proof}
 The rank condition in Theorem~\ref{theorem:CL-strict-convexity} and \eqref{eqn:connectedness_condition} is
 not directly related to each other as shown in the following example. 
 \begin{example}[Rank condition]
 Suppose $D=3$ for $\mathcal{Y}=\{\pm1\}^D$. 
 \begin{enumerate}
  \item  For $A_1=\{1\}$ and $A_2=\{2\}$, the condition \eqref{eqn:connectedness_condition} does not hold. 
	 We have $|b(y)|=2$. 
	 Since the matrix $(\1_{b_1(y)},\1_{b_2(y)})$ is equal to the $2$ by $2$ identity matrix, 
	 the rank condition is satisfied. 
  \item  For $A_1=\{1\}$ and $A_2=\{2,3\}$, the condition \eqref{eqn:connectedness_condition} holds.
	 Because of $|b(y)|=4$, the $4$ by $2$ matrix $(\1_{b_1(y)},\1_{b_2(y)})$ does not satisfy the rank
	 condition. 
 \end{enumerate}
\end{example}

The homogeneous score given by the gradient of the potential
$\phi(f)=\sum_{y\in\mathcal{Y}}f_y\phi_y(f_{b(y)}/f_y)$
is expressed as 
 \begin{align}
   \nonumber
 S(y,q)
 &=
 \sum_{\ell=1}^m
 \bigg\{ \log\big(1+\sum_{z\in b_\ell(y)}q_z/q_y\big)
 -\sum_{z\in b_\ell(y)}\frac{q_z}{\sum_{{z'}\in n_\ell(y)}q_{z'}}
 +\sum_{z:y\in b_\ell(z)}\frac{q_z}{\sum_{{z'}\in n_\ell(z)}q_{z'}}
 \bigg\} \\
 \label{eqn:general-composite-PS-likelihood}
  &=
  \sum_{\ell=1}^m \bigg\{-\log q(y|n_\ell(y))+\sum_{z:y\in n_\ell(z)}\!\!\! q(z|n_\ell(z)) -1 \bigg\}
 \end{align}
at $q\in\mathcal{P}$, 
where $n_\ell(y)=b_\ell(y)\cup\{y\}$ and $q(y|A)$ for $y\in A\subset\mathcal{Y}$ denotes the conditional
probability $q_y/\sum_{z\in{A}}q_z$ of the probability $q\in\mathcal{P}$. 
For the multidimensional binary space $\{\pm1\}^D$ with the above neighborhood system,
we have 
\begin{align*}
 z\in n_\ell(y)\ \Longleftrightarrow\ n_\ell(y)=n_\ell(z)\ \Longleftrightarrow
 y\in n_\ell(z). 
\end{align*}
Thus, we obtain 
\begin{align*}
 \sum_{z:y\in n_\ell(z)}q(z|n_\ell(z))
 =
 \sum_{z:z\in n_\ell(y)}q(z|n_\ell(z))
=
 \sum_{z:z\in n_\ell(y)}q(z|n_\ell(y))
 =1. 
\end{align*}
Therefore, the second and third terms in \eqref{eqn:general-composite-PS-likelihood} cancel out, and
the standard CL is obtained. Though the local potentials $\phi_y,\,y\in\mathcal{Y}$ are not additive, 
the score becomes $G$-local. 

Let us define an extension of the CL on the general sample space $\mathcal{Y}$ that is not necessarily
the multidimensional binary space. 
Suppose that the neighborhood system $n(y),\,y\in\mathcal{Y}$ is determined from an undirected graph
$G=(\mathcal{Y},E)$, 
and $n_\ell(y),\,\ell=1,\ldots,m$ be subsets of $n(y)$ such that
$\cup_{\ell=1}^m n_\ell(y)=n_\ell(y)$ holds. 
Then, \eqref{eqn:general-composite-PS-likelihood} is a $G$-local homogeneous proper score on the
general sample space $\mathcal{Y}$.
Note that the second and third terms in \eqref{eqn:general-composite-PS-likelihood} will not cancel out. 
Thus, we need to use the proper $\overline{G}$-local score
\eqref{eqn:general-composite-PS-likelihood} instead of the standard $G$-local CL. 

Let us summarize the theoretical properties of the extended CL. 
Suppose that the rank condition in Theorem~\ref{theorem:CL-strict-convexity} is satisfied
and that the undirected graph $G=(\mathcal{Y},E)$ 
is connected. Then, Theorems~\ref{theorem:strict-cvx-case}, \ref{theroem:connect_G_G0}, and
\ref{theorem:CL-strict-convexity} guarantees that 
the score \eqref{eqn:general-composite-PS-likelihood} is strictly proper, $\overline{G}$-local, and
homogeneous. Note that Theorem~\ref{theorem:CL-strict-convexity} works for any discrete sample space
$\mathcal{Y}$. 

 \begin{example}
  Suppose $\mathcal{Y}=\{\pm1\}^D$. 
  For the pseudo-likelihood, the sets $A_1,\ldots,A_m$
  in \eqref{eqn:composite-likelihood}
  is given by $\{1\},\ldots,\{D\}$ and the rank condition in Theorem~\ref{theorem:CL-strict-convexity} is
  satisfied. 
  As the another example, let us define the sets $A_1,\ldots,A_m$ as 
  $\{1\},\{3\},\ldots,\{D-1\}$ and $\{1,2\},\{3,4\},\ldots,\{D-1,D\}$
  with $m=D$ if $D$ is even.
  Then, the rank condition in Theorem~\ref{theorem:CL-strict-convexity} holds.
  Similar result holds for the odd $D$. 
  As shown in Proposition~7.1 of \cite{Bradley-Guestrin:aistats12crfs}, 
 the standard CL~\eqref{eqn:composite-likelihood} is strictly proper if 
\begin{align}
 \label{example:sufficient-cond_CL_stproler}
 \bigcup_\ell A_\ell=\{1,\ldots,D\},\quad A_\ell\cap A_{\ell'}=\emptyset,\,\ell\neq\ell'
\end{align}
  holds. 
  The rank condition in Theorem~\ref{theorem:CL-strict-convexity} is more strict than
  \eqref{example:sufficient-cond_CL_stproler}. 
Under the condition~\eqref{example:sufficient-cond_CL_stproler}, the convexity of local potentials is not
guaranteed, but the score becomes strictly proper. 
On the other hand, the rank condition is a sufficient condition for the local potential to be strictly convex.  
An advantage of the rank condition is that it is available for any discrete sample space. 
\end{example}


We consider the sufficient  condition under which
\eqref{eqn:general-composite-PS-likelihood} on the general sample space
$\mathcal{Y}$ is reduced to the form of the standard CL. 
When $n_\ell(y)=n_\ell(z)$ holds for any $z\in n_\ell(y)$, the neighborhood system $n_\ell$ determined from
$G$ is called the equivalence function~\cite{liang08:_asymp_analy_gener_discr_pseud_estim}. 
Suppose that $n_\ell$ is the equivalence function for all $\ell$. 
In the same way as the case of $\mathcal{Y}=\{\pm1\}^D$, one can prove that
\eqref{eqn:general-composite-PS-likelihood} is reduced to the $G$-local CL
\begin{align*}
S(y,q)=-\sum_{\ell=1}^m\log{q(y|n_\ell(y))}
\end{align*}
on the general sample space $\mathcal{Y}$.
In \cite{liang08:_asymp_analy_gener_discr_pseud_estim}, the equivalence function is used without
referring to the relation between $G$-local and $\overline{G}$-local scores.


\subsection{Ratio Matching}
Suppose that the sample space is defined as $\mathcal{Y}=\{\pm1\}^D$ and
that two points in $\mathcal{Y}$ are adjacents to each other if the Hamming distance between them is equal to
one. The neighborhood system is determined by such $G=(\mathcal{Y},E)$. The score of the ratio matching is defined as 
\begin{align}
 \label{eqn:ratio-matching}
 S(y,q)
 &=\sum_{i=1}^D\frac{1}{(1+q_y/q_{\bar{y}^{(i)}})^2} \nonumber\\ 
 &=\sum_{z\in b(y)}\frac{1}{(1+q_y/q_{z})^2}, 
\end{align}
which is $G$-local. The normalization constant of the probability $q$ is not needed to compute
$S(y,q)$ due to the ratio $q_y/q_{z}$. 
The corresponding local potential is 
\begin{align*}
 \phi_y(g)=-\frac{1}{2}\sum_{z\in b(y)}\frac{g_z}{1+g_z}, 
\end{align*}
which is $G$-local, additive and strictly convex on $\Rbb_{++}^{b(y)}$. One can confirm that the equality
\eqref{eqn:sep-local-score} holds. 
The composite local Bregman divergence is 
\begin{align*}
 D_\Phi(p,q)
 =
 \sum_{y\in\mathcal{Y}} p_y\sum_{z\in{b(y)}}\frac{(p_z/p_y-q_z/q_y)^2}{(1+p_z/p_y)(1+q_z/q_y)^2}
 =
 \sum_{y\in\mathcal{Y}}\sum_{z\in{b(y)}}(p_y+p_z) \bigg(\frac{p_z}{p_y+p_z}-\frac{q_z}{q_y+q_z}\bigg)^2. 
\end{align*}
The strict
convexity of $\phi_y$ and $\mathcal{Y}_0=\mathcal{Y}$ ensure that the assumptions of
Theorem~\ref{theorem:strict-cvx-case} hold. Hence, the above composite local Bregman divergence satisfies the
coincidence axiom on $\mathcal{P}$. 
Even for the general discrete sample space $\mathcal{Y}$, the ratio matching using
\eqref{eqn:ratio-matching} is strictly proper whenever the graph $G=(\mathcal{Y},E)$ is connected.

\subsection{Local Density-Power Score}
Below we present the strictly proper local homogeneous score
defined from the density-power potentials in Example~\ref{example:Density-power-score}.
For the graph $G=(\mathcal{Y},E)$ that determines the neighborhood system of the discrete sample space $\mathcal{Y}$, 
let us define the $G$-local additive potential $\phi_y:\Rbb_{++}^{b(y)}\rightarrow\Rbb$ as 
\begin{align*}
 \phi_y(f)=\frac{1}{1+\gamma}\sum_{z\in{b(y)}}f_z^{1+\gamma}, 
\end{align*}
for a fixed positive constant $\gamma$. 
The potential is strictly convex.
The local homogeneous score defined from 
$\{\phi_y\,|\,y\in\mathcal{Y}_0\}$ with $\mathcal{Y}_0=\mathcal{Y}$ is 
\begin{align*}
 S(y,f) =
 \sum_{z\in{b(y)}}
 \left\{
 \frac{\gamma}{1+\gamma}\left(\frac{f_z}{f_y}\right)^{1+\gamma}
 -\left(\frac{f_y}{f_z}\right)^{\gamma}
 \right\}, \quad f\in\mathcal{F}
\end{align*}
which is $G$-local. The composite local Bregman divergence is given as 
\begin{align*}
 D_{\Phi}(f,g)
 &=
 \sum_{y\in\mathcal{Y}} f_y\sum_{z\in{b(y)}}
 \left\{
  \frac{1}{1+\gamma}\left(\frac{f_z}{f_y}\right)^{1+\gamma}
 +\frac{\gamma}{1+\gamma}\left(\frac{g_z}{g_y}\right)^{1+\gamma}
 -\left(\frac{g_z}{g_y}\right)^{\gamma}\frac{f_z}{f_y}
 \right\},\quad f,g\in\mathcal{F}. 
\end{align*}
Theorem~\ref{theorem:strict-cvx-case} guarantees that the coincidence axiom on $\mathcal{P}$ holds if $G$ is
connected. Accordingly, the above homogeneous score is strictly proper.

\subsection{Local Pseudo-spherical Score}
\label{subsec:Local-Pseudo-Spherical-Score}
We show the explicit expression of the local pseudo-spherical score.
For the graph $G=(\mathcal{Y},E)$ that determines the neighborhood system of a discrete sample space $\mathcal{Y}$, 
let us define the $G$-local potential $\phi_y:\Rbb_{++}^{b(y)}\rightarrow\Rbb$ as
\begin{align*}
 \phi_y(f)=\|f_{b(y)}\|_{1+\gamma}=\bigg(\sum_{z\in{b(y)}}f_z^{1+\gamma}\bigg)^{1/(1+\gamma)}. 
\end{align*}
This potential is convex but not strictly convex on $\Rbb_{++}^{b(y)}$.
Moreover, $\phi_y$ is not additive. 
Hence, the corresponding score is not $G$-local but $\overline{G}$-local in general. 
The score derived from the above potential is given as
 \begin{align*}
    S(y,f) 
  = 
  -\sum_{z\in{b(y)}}\frac{(f_y/f_z)^{\gamma}}{\|f_{b(z)}/f_z\|_{1+\gamma}^{\gamma}}
  =
  -\sum_{z\in{b(y)}}\|f_{b(z)}/f_y\|_{1+\gamma}^{-\gamma},\quad f\in\mathcal{F}
  \end{align*}
and the composite local Bregman divergence is 
 \begin{align*}
  D_\Phi(f,g)=
  \sum_{y\in\mathcal{Y}}f_y
  \sum_{z\in b(y)}
  \left(
  \|f_{b(z)}/f_y\|_{1+\gamma}^{-\gamma}
  -\|g_{b(z)}/g_y\|_{1+\gamma}^{-\gamma}
  \right),\quad f,g\in\mathcal{F}.
\end{align*}
Whenever the neighborhood system is properly defined, 
$D_\Phi(p,q)$ satisfies the coincidence axiom on
$\mathcal{P}$ and the corresponding local homogeneous score is strictly proper. 


Let us consider the local pseudo-spherical score on $\mathcal{Y}=\{\pm1\}^D$. 
The undirected graph $G=(\mathcal{Y},E)$ is defined such that $(y,y')\in E$ if and only if $d_H(y,y')=1$, 
meaning that the neighborhood system of $\mathcal{Y}$ is expressed as the $D$-dimensional hyper-cube. 
Suppose $\mathcal{Y}_0=\mathcal{Y}$. Then, it is clear that the assumptions on the graph in
Theorem~\ref{theorem:strict-cvx-case} are satisfied. 
The first assumption in Theorem~\ref{theorem:non-strict-cvx}, $\cup_{y\in\mathcal{Y}_0}b(y)=\mathcal{Y}$, 
also holds. 
Let us consider the second condition of Theorem~\ref{theorem:non-strict-cvx}, i.e., the connectedness of $G_0'$. 
We see that 
$(y,y')\in E_0'$ if and only if $d_{H}(y,y')$ is equal to $2$. 
Suppose that $y,y'\in\mathcal{Y}_0=\mathcal{Y}$ are connected by a path $(y,y_1),(y_1,y_2),\ldots,(y_k,y')$ in
$G_0'$. Then, we have $d_H(y,y_1)=d_H(y_1,y_2)=\cdots=d_H(y_k,y')=2$. 
If $d_H(y,y')$ is odd, there does not exist a path connecting $y$ and $y'$ in $G_0'$. 
As a result, $G_0'$ is not connected. 
Indeed, $G_0'$ has two disjoint components. 
The two components are expressed by 
\begin{align*}
 C_{\mathrm{odd}}=\{y\in\mathcal{Y}_0\,|\,d_H(y_0,y)=\mathrm{odd}\},\quad 
 C_{\mathrm{even}}=\{y\in\mathcal{Y}_0\,|\,d_H(y_0,y)=\mathrm{even}\}, 
\end{align*}
where $y_0\in\mathcal{Y}_0$ is a fixed point. 
By adding an edge in between $C_{\mathrm{odd}}$ and $C_{\mathrm{even}}$, we obtain a connected graph. 
In order to obtain strictly proper local homogeneous scores from local pseudo-spherical potentials, 
we need to define the neighborhood system carefully.

\section{Classification}
\label{sec:Classification}

Let us consider classification problems. Suppose that
i.i.d. samples $(x_1,y_1),\ldots,(x_n,y_n)\in\mathcal{X}\times\mathcal{Y}$ are observed from the probability
distribution $p(x)p(y|x)$, where $\mathcal{X}$ is a domain of feature $x$ and $\mathcal{Y}$ is a finite label
set. Our goal is to estimate the conditional probability $p(y|x)$ or to predict the label $y\in\mathcal{Y}$ of a newly
observed $x$. We assume the statistical model of the conditional probability
\begin{align*}
 q(y|x;\theta)=\frac{f(y|x;\theta)}{Z_\theta(x)},\quad Z_\theta(x)=\sum_{y\in\mathcal{Y}}f(y|x;\theta), 
\end{align*}
where $q(\cdot|x;\theta)\in\mathcal{P},\,f(\cdot|x;\theta)\in\mathcal{F}$ and $\theta$ is the parameter
of the model. 
When the size of the label set $\mathcal{Y}$ is large, the computation of $Z_\theta(x)$ is intractable.
The proper local homogeneous score is used in order to avoid the computation of the normalization constant. 

The score is applicable for the estimation of the conditional probability. 
Let $\Ebb_x[\cdot]$ be the expectation with respect to the distribution of the feature $x$.
When $q(\cdot|x)\in\mathcal{F}$ is used to fit the data, the empirical mean of the proper homogeneous score
satisfies 
\begin{align*}
 \frac{1}{n}\sum_{i=1}^nS(y_i,q(\cdot|x_i))
 & \longrightarrow\ 
 \Ebb_x\bigg[\sum_{y\in\mathcal{Y}}p(y|x)S(y,q(\cdot|x))\bigg] \\
 &= \Ebb_x\bigg[S(p(\cdot|x),q(\cdot|x))\bigg]
 \geq \Ebb_x\bigg[S(p(\cdot|x),\lambda\,p(\cdot|x))\bigg],\quad \lambda>0. 
\end{align*}
The minimization of the empirical score with respect to the unnormalized model 
is expected to provide a good estimator of $p(y|x)$ up to a constant factor.

\section{Simulations}
\label{sec:Simulations}

We performed numerical experiments to investigate how scores and neighborhood systems affect estimation
accuracy. In the first experiment, We used the fully-visible Boltzmann machine as the statistical model to
estimate probability functions. Next, we consider the classification problems with a small label set to study
the relation between neighborhood system and prediction accuracy. 

\subsection{Boltzmann Machines}

In the experiments, the sample space was the multidimensional binary variables $\mathcal{Y}=\{\pm1\}^D$. 
The fully-visible and fully connected Boltzmann Machine (BM) on $\mathcal{Y}$ is the statistical model defined as 
\begin{align*}
 q_W(y)=\frac{\exp(y^T W y)}{Z_W},\quad Z_W=\sum_{z\in\mathcal{Y}}\exp(z^T W z), 
\end{align*}
where $y\in\mathcal{Y}$ is the $D$-dimensional column vector. The model parameter $W$ is the $D$ by $D$
symmetric matrix. For large $D$, the computation of $Z_W$ is intractable. 
When $D$-dimensional i.i.d. samples $y(1),\ldots,y(n)\in\mathcal{Y}$ are observed, our task is to estimate the
parameter $W$ such that $q_W(y)$ approximates the probability distribution of the samples. 

To estimate the parameter $W$, the unnormalized model 
\begin{align*}
 f_W(y)=\exp(y^TWy)
\end{align*}
is used with proper local homogeneous scores in order to circumvent the computation of the
normalization constant. The estimator $\widehat{W}$ of $W$ is obtained as the minimum solution of empirical
score, 
\begin{align*}
 \min_W \frac{1}{n}\sum_{t=1}^{n}S(y(t),\,f_W). 
\end{align*}
In the experiment, we used the pseudo-likelihood (PL), ratio matching (RM), local pseudo-spherical (PS) score 
as proper local homogeneous scores. 
The parameter $\gamma$ in the pseudo-spherical score was set to $\gamma=1$ and $3$. 
We examined two neighborhood systems determined by the Hamming distance $d_H$, one is
$n_1(y)=\{y'\,|\,d_H(y,y')\leq1\}$ and  the other is $n_2(y)=\{y'\,|\,d_H(y,y')\leq2\}$.  

We estimated the parameter $W$ for $D=8, 16$ and $32$. The sample size was set to $n=1000$ and $3000$. 
When $D=8$, the maximum likelihood estimator was also computed. In order to evaluate the accuracy of the
estimator $\widehat{W}$, the negative log-loss, 
 $-\frac{1}{N}\sum_{t=1}^{N}\log{q_{\widehat{W}}(\widetilde{y}(t))}$, 
was computed on the test samples $\widetilde{y}(1),\ldots,\widetilde{y}(N)\in\mathcal{Y}$. Here, 
the logarithm of the normalization constant $Z_{\widehat{W}}$ was approximated by using the annealed importance 
sampling(AIS)~\cite{salakhutdinov08:_learn_evaluat_boltz_machin}. 
Though the AIS is available to obtain an approximation of $\log{Z_W}$ of a single parameter $W$, 
using the AIS in the optimization process is computationally impractical. 

In the first setup, the training and test samples were generated from the probability distribution defined by
the Boltzmann machine, meaning that the statistical model was correct. 
We generated the random matrix $W=(\widetilde{W}+\widetilde{W}^T)/2$, where each element of $\widetilde{W}$
was independently distributed from the standard normal distribution. 
Then, the training and test samples were sampled from the probability $q_W(y)$ by using the Markov chain Monte Carlo (MCMC) 
method~\cite{andrieu03:_introd_mcmc_machin_learn} with the transition probability 
$q_W(y_i|(y_j)_{j\neq i})$ for $y=(y_i)_{i=1,\ldots,D}\in\mathcal{Y}$. 
The burn-in period was set to $100D$. The size of the test samples was fixed to $N=5000$. 
The experiments were repeated 20 times. In each repetition, the different matrix $W$ was used. 
The averaged negative log-loss was calculated to evaluate the accuracy of the estimator $\widehat{W}$ for each
estimation method. 

The mean and standard deviation (sd) (resp. median and median absolute deviation (mad))
of the averaged negative log-loss was shown in the top (resp. bottom) panel of
Table~\ref{table:sim_result_BM}.
The negative log-loss for the uniform distribution, i.e., $D\log{2}$, was also presented in the bottom line. 
When $n=1000$ and $D=16,\,32$, the mean value of the averaged loss was extremely large and the median stays
small. It means that the averaged loss can become extremely large for a certain matrix $W$. 
The estimator using $3000$ samples took into account the wider range of the sample space than the estimator using 
$1000$ samples. As a result, the accuracy was much improved. 
About the neighborhood system, the estimator using the neighborhood $n_2$ was superior to that using the
neighborhood $n_1$. For the pseudo-likelihood, 
Liang and Jordan proved that the wider neighborhood leads to more accurate estimator under some conditions
\cite{liang08:_asymp_analy_gener_discr_pseud_estim}. We numerically verified that not only 
the pseudo-likelihood but also the ratio matching score has the same tendency. 

The estimation accuracy of the local PS score was comparable to the other methods with $n_1$. Though the local
PS-score with the neighborhood system $n_1$ is not strictly proper on the set of all probabilities over
$\{\pm1\}^D$, it might be strictly proper on smaller models such as Boltzmann machines. In addition, the
estimation accuracy of the local PS score was not significantly affected by the choice of $\gamma$. This will
be because the statistical model was well-specified, and the local PS score with any $\gamma$ provides a
consistent estimator for Boltzmann machines. 

In the second setup, the handwritten data called 
``Optical Recognition of Handwritten Digits Data Set'' in UCI repository~\cite{Lichman:2013}
was used as training and test datasets.
The handwritten data has 5620 samples, 64 features, and each sample has one label in 
$\{0,\ldots,9\}$. Each feature takes an integer ranging from $0$ to $16$. 
In the experiment, the feature vector in $\{0,\ldots,16\}^{64}$ was converted into the binary sample in $\{\pm1\}^{64}$
by changing $0$ to $-1$ and $1,\ldots,16$ to $+1$, and the Boltzmann machine $q_W$ was used as the statistical
model. Here, the class label was ignored and the probability distribution of the converted features was
estimated. 
The training sample size was set to $n=1000$ and $n=3000$, and the rest were used as test samples. 
We chose $D=8,16,32$ features out of 64 features. In the same way as the first setup, 
we examined the PS, RM, and local PS score to estimate the parameters, and 
the estimation accuracy was evaluated by the negative log-loss on the test samples. 
The AIS was used for the computation of the test loss. 

Table~\ref{table:sim_result_BM_optdisit} shows the mean value of the averaged negative log-loss for each
estimator with the standard deviation. In this case, the median was almost the same as the mean. 
By comparing the result for $n_1$ and $n_2$, we find that the estimator using the wider neighborhood achieved
a higher accuracy particularly in the case of $D=32$. 
In the local PS score, the choice of $\gamma$ significantly affected the estimation accuracy. 
In the second setup, the statistical model will not be well specified. Thus, the estimation bias depended on
the parameter $\gamma$. In practical data analysis, one needs to choose the parameter $\gamma$ carefully. 


\begin{table}[t]
 \begin{center}
\caption{
 Negative log-loss on test data for maximum likelihood estimator (MLE), pseudo-likelihood (PS), ratio matching (RM), and
 local pseudo-spherical score (PS). The mean with standard deviation and median with median absolute deviation are shown. }
  \label{table:sim_result_BM}
  \footnotesize
    \hspace*{-10mm}
  \begin{tabular}{lccccccccc}
 \multicolumn{9}{c}{mean (sd)} \\ \hline
   &\multicolumn{2}{c}{$\dim x=8$} &&
   \multicolumn{2}{c}{$\dim x=16$} &&
   \multicolumn{2}{c}{$\dim x=32$} \\
   \cline{2-3}\cline{5-6}\cline{8-9}
                      &     $n=1000$ & $n=3000$     &&     $n=1000$   & $n=3000$    &&   $n=1000$    & $n=3000$  \\ \hline 
MLE                   & 1.491 (0.642)& 1.624 (0.667)&&  --            &  --         &&  --           & --  \\
PL: $n_1$             & 4.273 (5.713)& 1.752 (0.779)&& 40.577 (86.027)&4.480 (5.226)&&41.839 (93.027)&6.393 (3.897)\\
PL: $n_2$             & 1.700 (0.888)& 1.646 (0.664)&& 26.409 (66.334)&2.682 (2.639)&&36.205 (89.569)&4.295 (2.773)\\
RM: $n_1$             & 4.277 (5.715)& 1.755 (0.787)&& 39.707 (84.838)&4.528 (5.740)&&40.917 (89.778)&6.365 (4.144)\\
RM: $n_2$             & 3.481 (5.275)& 1.652 (0.660)&& 38.264 (85.522)&3.940 (4.964)&&40.464 (93.362)&5.521 (3.620)\\
PS($\gamma=1$): $n_1$ & 3.744 (5.482)& 1.675 (0.687)&& 38.993 (83.623)&4.471 (5.849)&&38.478 (87.264)&5.497 (3.159)\\ 
PS($\gamma=3$): $n_1$ & 4.014 (5.698)& 1.756 (0.754)&& 39.670 (84.260)&4.384 (5.409)&&40.370 (95.292)&6.243 (3.835)\\ \\ 
 \multicolumn{9}{c}{median (mad)} \\ \hline
MLE                   & 1.413 (0.720)& 1.604 (0.890)&&   --         & --          && --& --\\
PL: $n_1$             & 1.669 (1.154)& 1.667 (0.958)&& 5.614 (5.451)&2.435 (1.050)&&11.578  (9.743)&4.934 (1.799)\\
PL: $n_2$             & 1.445 (0.901)& 1.628 (0.874)&& 3.083 (1.566)&2.056 (0.679)&&\ 6.108 (2.719)&3.735 (1.626)\\
RM: $n_1$             & 1.657 (1.179)& 1.683 (0.921)&& 4.390 (3.787)&2.502 (0.805)&&10.544  (8.494)&5.606 (2.109)\\
RM: $n_2$             & 1.459 (0.907)& 1.623 (0.877)&& 4.361 (3.776)&2.251 (0.547)&&\ 8.364 (5.477)&4.721 (3.263)\\
PS($\gamma=1$): $n_1$ & 1.452 (0.937)& 1.666 (0.898)&& 3.903 (3.091)&2.360 (1.117)&&\ 9.438 (7.153)&4.752 (2.900)\\ 
PS($\gamma=3$): $n_1$ & 1.453 (1.026)& 1.663 (0.948)&& 4.761 (4.173)&2.430 (1.451)&&\ 7.452 (5.648)&4.795 (2.326)\\ \hline
uniform dist. & \multicolumn{2}{c}{$5.545$}  && \multicolumn{2}{c}{$11.090$} && \multicolumn{2}{c}{$22.181$} \\ 
  \end{tabular}
 \end{center}
\end{table}

\begin{table}[h]
 \begin{center}
 \caption{The negative log-loss of each estimator for handwritten data.}
 \label{table:sim_result_BM_optdisit}
  \footnotesize
  \hspace*{-10mm}
  \begin{tabular}{lccccccccc}
   &\multicolumn{2}{c}{$\dim x=8$} & &
   \multicolumn{2}{c}{$\dim x=16$} & &
   \multicolumn{2}{c}{$\dim x=32$}\\
   \cline{2-3}\cline{5-6}\cline{8-9}
                      &    $n=1000$   & $n=3000$      &&   $n=1000$   & $n=3000$    &&   $n=1000 $    & $n=3000$  \\ \hline
MLE                   &  2.986 (0.029)& 2.954 (0.021) &&  --          &    --       &&   --           & --        \\
PL: $n_1$             &  3.009 (0.175)& 2.959 (0.024) &&4.703 (0.568) &4.467 (0.057)&&25.938 (13.716) & 21.163 (7.706) \\
PL: $n_2$             &  2.997 (0.035)& 2.963 (0.027) &&4.620 (0.110) &4.473 (0.062)&&19.937 (11.421) & 13.104 (6.164) \\
RM: $n_1$             &  3.018 (0.191)& 2.969 (0.025) &&4.969 (1.012) &4.505 (0.067)&&23.640 (13.171) & 18.503 (7.623) \\
RM: $n_2$             &  2.999 (0.038)& 2.963 (0.026) &&4.625 (0.123) &4.475 (0.057)&&21.051 (11.180) & 14.188 (6.026) \\
PS($\gamma=1$): $n_1$ &  3.062 (0.127)& 2.998 (0.069) &&4.665 (0.116) &4.488 (0.061)&&\hspace*{-2mm}19.091 (9.907) & 13.296 (5.443) \\ 
PS($\gamma=3$): $n_1$ &  3.046 (0.198)& 3.013 (0.067) &&6.345 (1.070) &5.487 (0.929)&&84.557 (23.407) & \ \  63.777 (21.563)\\ \\ 
  \end{tabular}
 \end{center}
\end{table}

\subsection{Classification}
\label{subsec:Classification}
In this section, we used the scores to classification problems. The handwritten data in the previous section was
again used. The data had 5620 samples, 64 features, and 10 labels. This time, the features were not reduced to the
binary, i.e., the original features were used. 
Since the data has only 10 labels, the local homogeneous score is not needed to reduce the computational
cost. Here, we investigate the influence of the neighborhood system to the prediction accuracy in classification problems. 
Using the training data $(x_1,y_1),\ldots,(x_n,y_n)$, the conditional probability of the label
$y\in\mathcal{Y}=\{0,1,\ldots,9\}$ given the input vector $x\in\{0,1,\ldots,16\}^{64}$ was estimated. 
The statistical model for the conditional probability was defined as 
\begin{align*}
 q(y|x;\theta)=\frac{f(y|x;\theta)}{Z_{\theta}(x)},\quad Z_{\theta}(x)=\sum_{y\in\mathcal{Y}}f(y|x;\theta), 
\end{align*}
where 
\begin{align*}
 f(y|x;\theta)=\exp\{\theta_y^T t(x)\},\quad \theta_y\in\Rbb^{64},\ t(x)=(x_1,\ldots,x_{64})^T. 
\end{align*}
The dimension of the model parameter $\theta=(\theta_y)_{y\in\mathcal{Y}}$ is 640. The model is overparametrized, i.e., 
the different parameter can specify the same conditional probability. 
The estimator was evaluated by the prediction error rate and negative log-loss on the test samples. 

The neighborhood system introduced on $\mathcal{Y}=\{0,\ldots,9\}$ is defined as 
\begin{align*}
 n_k(y)=\{z\in\mathcal{Y}\,|\,|y-z|\leq k \}, 
\end{align*}
where $|y-z|$ for labels is computed as the difference of integers. We used the neighborhood system with $k=1$ and $2$. We examined the
MLE, PL, RM, local PS score, composite likelihood (CL) and modified composite likelihood (mCL) in
\eqref{eqn:general-composite-PS-likelihood}. 
The CL with the above neighborhood system
does not leads to proper score on $\mathcal{Y}$. In order to guarantee the consistency of the
estimator, we need to use the mCL instead of the CL. 

The signal to noise ratio of the original data is very low and the estimator of the conditional probability
tends to overfit to 
training data. In this experiment, we added artificial noise to the data. Concretely, 10$\%$ or 20$\%$ samples were 
picked up, and their labels were replaced according to the uniform distribution on $\mathcal{Y}$. 
The training sample size was set to $n=2000$ and $n=4000$, and the rest were used as test samples. 
We randomly split the dataset into training and test sets, and repeated the experiments 100 times. 

The results are presented in Figures~\ref{fig:test_errors} and \ref{fig:log_loss}. 
Overall, the MLE achieved the highest accuracy. 
The estimators with the wider neighborhood $n_2$ were superior to those with $n_1$. 
Under the test error of the label, 
the local PS score with $n_2$ (PS:2) provided the better classifier than the other local scores. 
The negative log-loss of the PL, CL and mCL with $n_2$ (PL:2, CL:2, and mCL:2) was smaller than the other
local homogeneous scores.
This is because, the empirical loss to be minimized in the learning is close
to the negative log-loss for the evaluation. 
In this experiment, the CL and mCL showed almost the same accuracy. 
The efficiency of the local PS score was less than that of the PL, CL and mCL.

\begin{figure}[t]
  \begin{tabular}{cc}
   \includegraphics[scale=0.27]{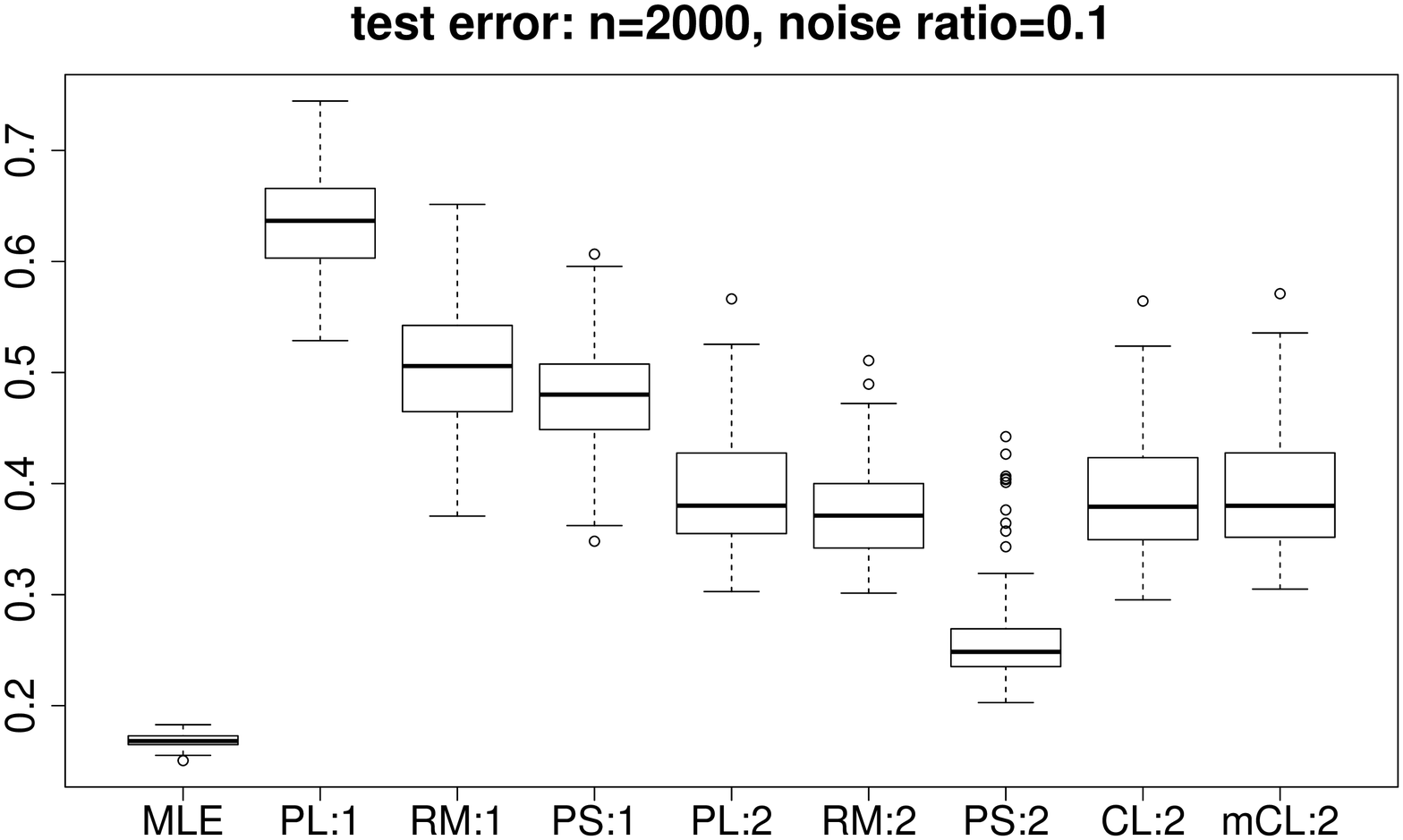}&
   \includegraphics[scale=0.27]{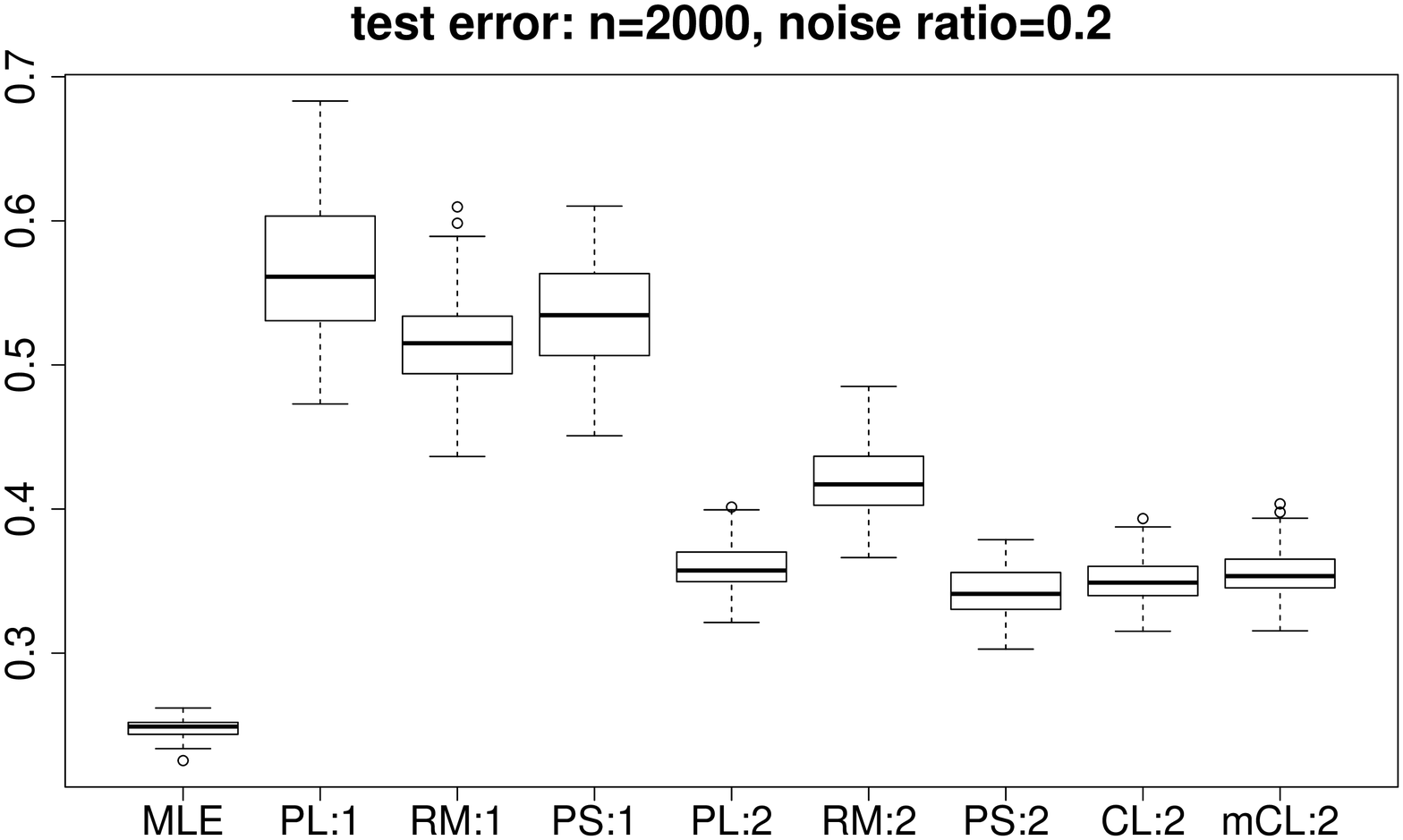}\\
   \includegraphics[scale=0.27]{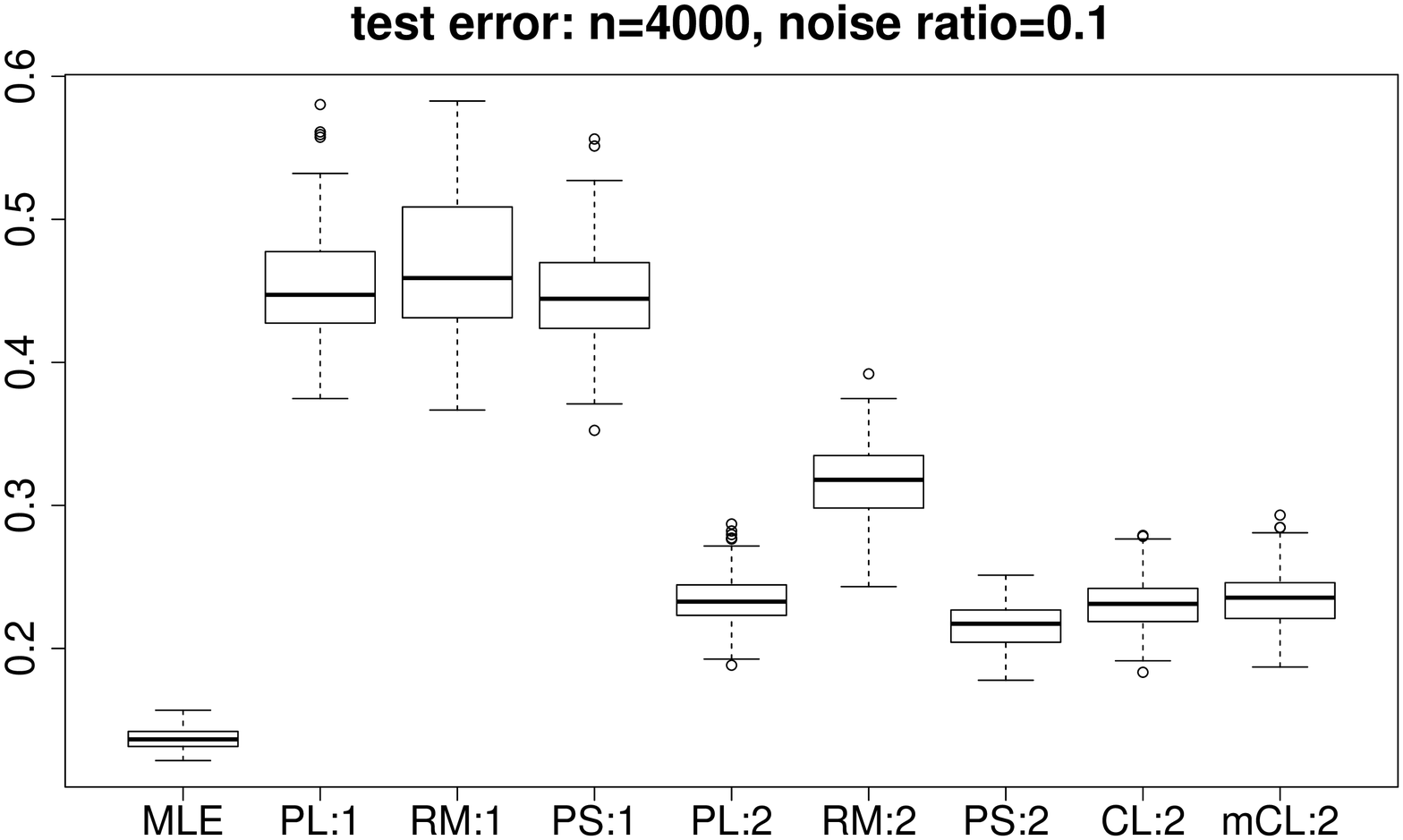}&
   \includegraphics[scale=0.27]{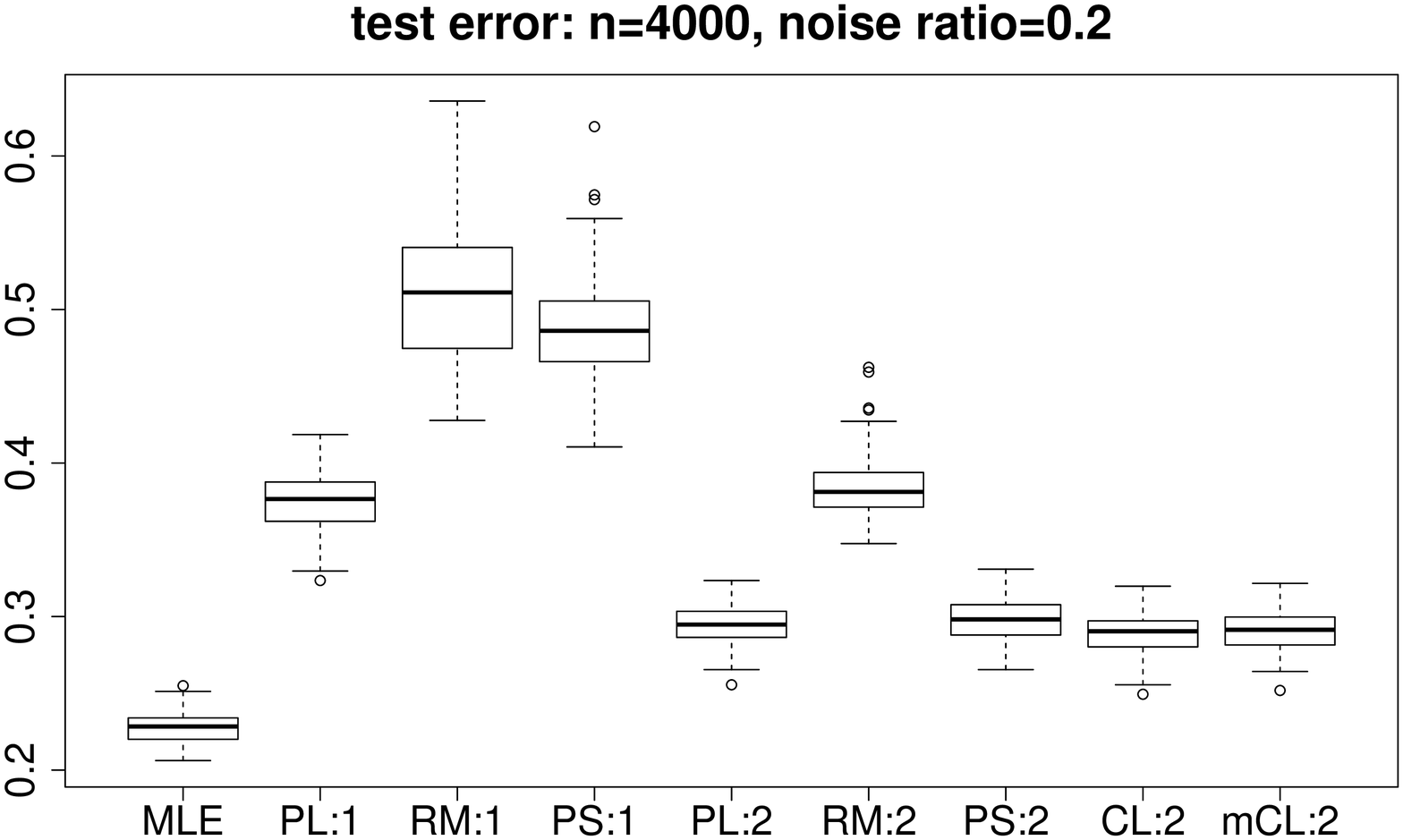}
  \end{tabular}
 \caption{Test errors of the maximum likelihood estimator (MLE), pseudo-likelihood (PL:$k$), ratio-matching (RM:$k$), local 
 pseudo-spherical score (PS:$k$), composite likelihood (CL:$k$), and modified composite likelihood
 (mCL:$k$) with neighborhood systems $n_k, k=1,2$. 
 Top panels: $n=2000$ and the noise ratio $0.1$ and $0.2$. 
 Bottom panels: $n=4000$ and the noise ratio $0.1$ and $0.2$. }
 \label{fig:test_errors}
\end{figure}

\begin{figure}[h]
  \begin{tabular}{cc}
   \includegraphics[scale=0.27]{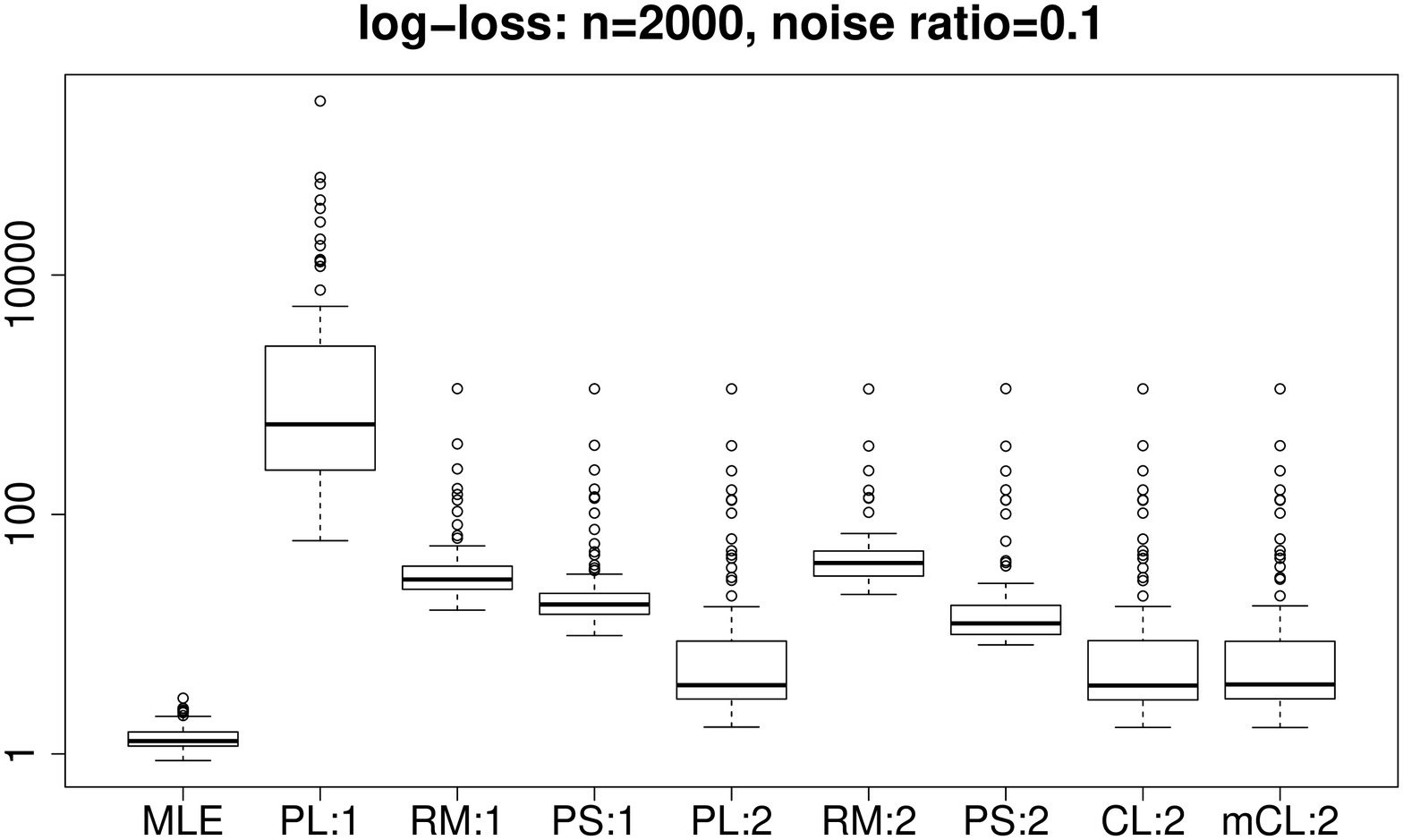}&
   \includegraphics[scale=0.27]{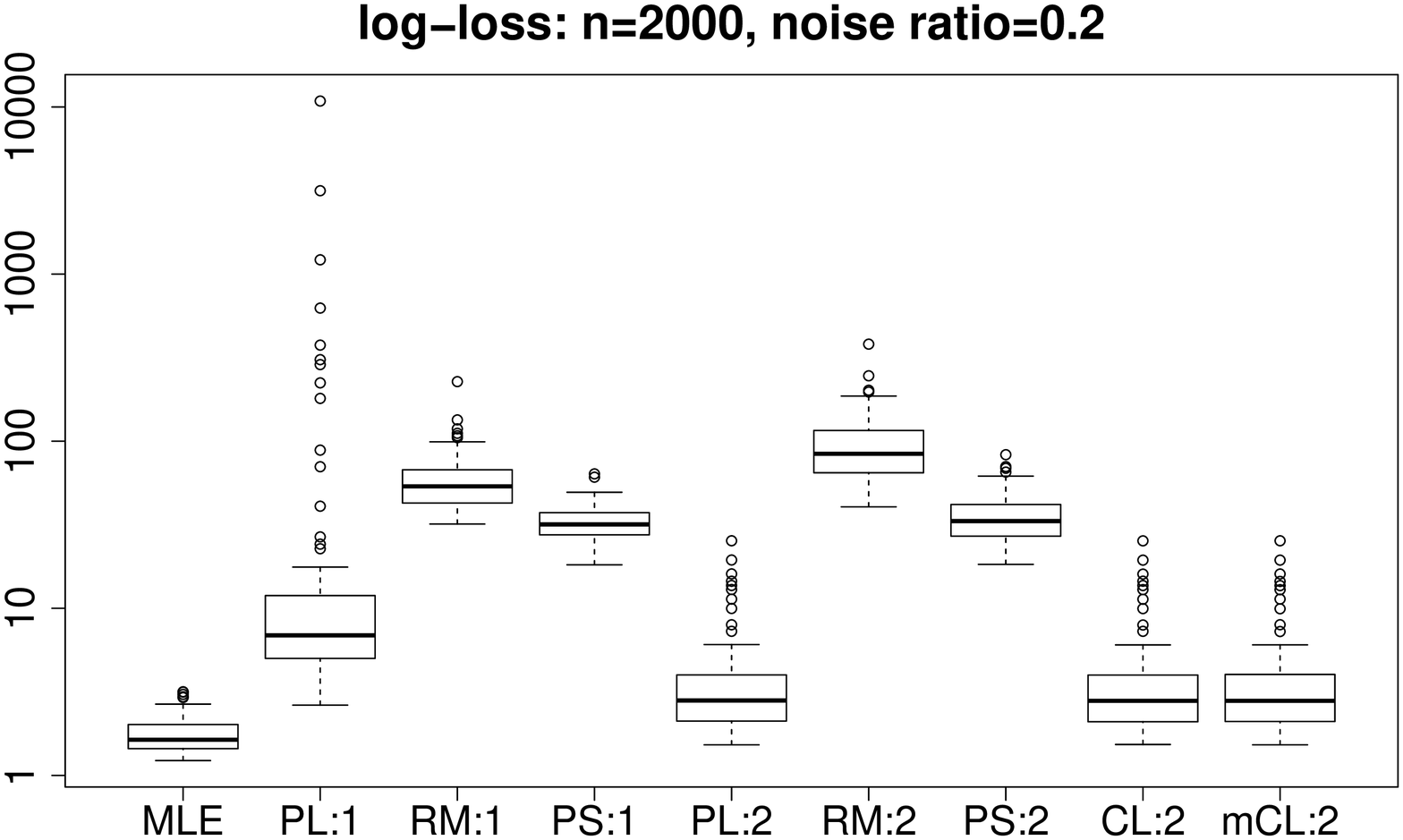}\\
   \includegraphics[scale=0.27]{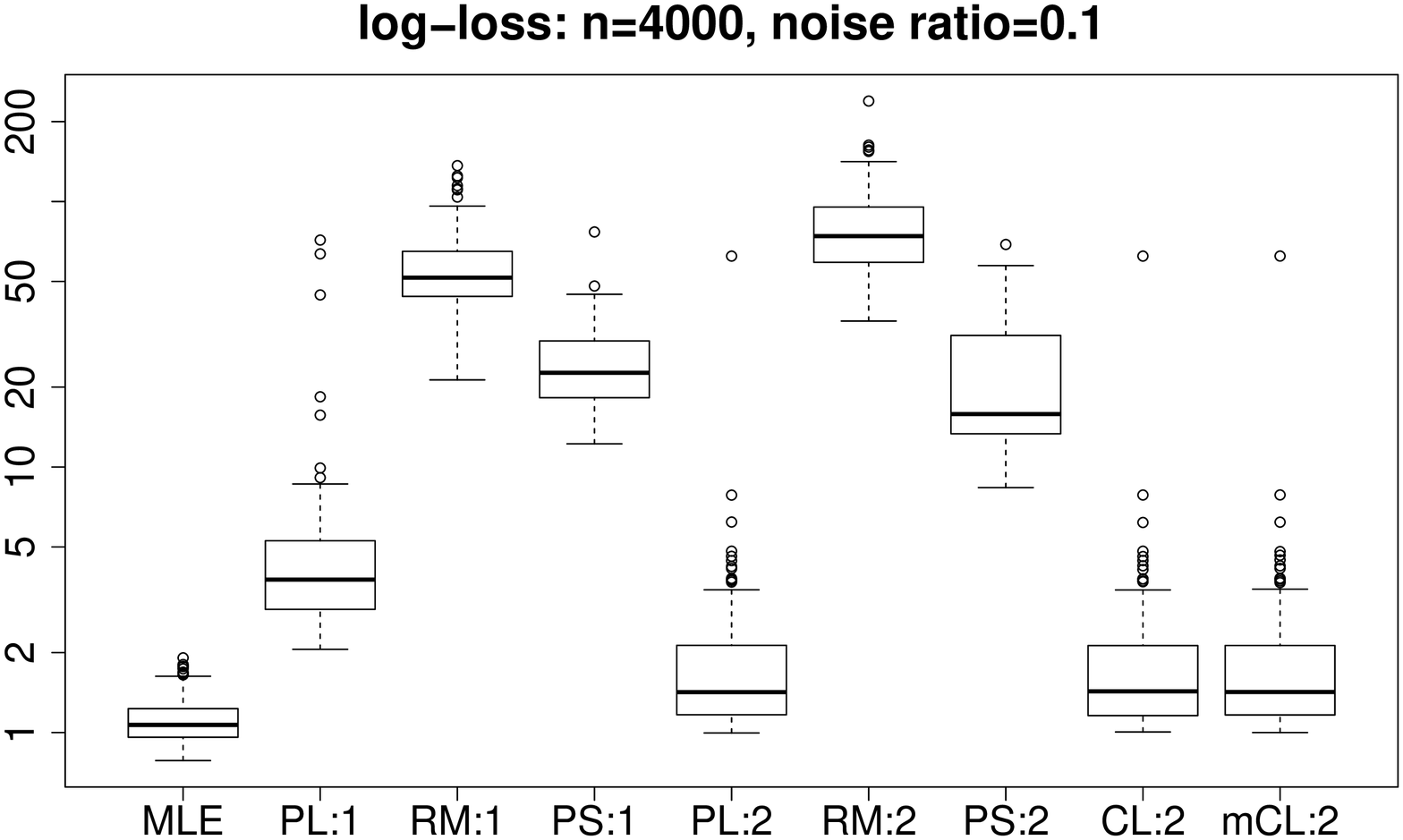}&
   \includegraphics[scale=0.27]{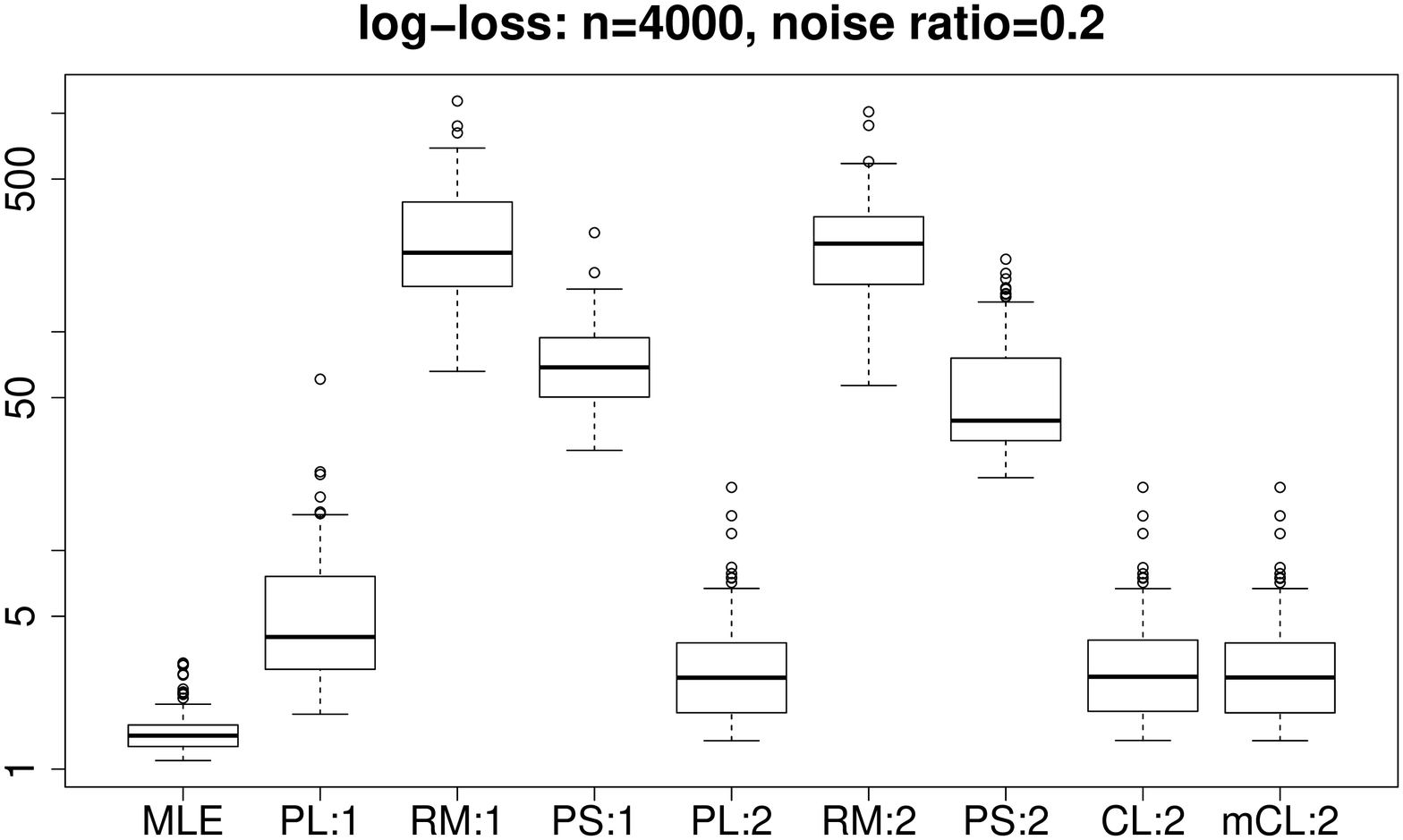}
  \end{tabular}
 \caption{Negative log-loss of the maximum likelihood estimator (MLE), pseudo-likelihood (PL:$k$), ratio-matching (RM:$k$), local
 pseudo-spherical score (PS:$k$), composite likelihood (CL:$k$), and modified composite likelihood
 (mCL:$k$) with neighborhood systems $n_k, k=1,2$. 
 Top panels: $n=2000$ and the noise ratio $0.1$ and $0.2$. Bottom panels: $n=4000$ and the noise ratio $0.1$ and $0.2$. }
 \label{fig:log_loss}
\end{figure}

\section{Conclusion}
\label{sec:Conclusion}

In this paper, we study strict properness of local homogeneous scores through 
the coincidence axiom for composite local Bregman divergences on discrete sample space. 
The connectedness of the neighborhood graph is important to derive strictly proper 
local homogeneous scores. 
The consistency of some existing scores and newly proposed ones is investigated.
Some numerical experiments are conducted to investigate how neighborhood systems affect the estimation accuracy. 

There are several directions to be further explored in the future. It is of great interest to study the
efficiency and robustness of local scores. 
More precisely, how the estimation accuracy and robustness of the estimator relates to the neighborhood
system, local potential and statistical models.
In the numerical experiments in Section~\ref{subsec:Classification}, 
the efficiency of the local PS score was inferior to that of the pseudo-likelihood and composite likelihood.
However, the local PS score may provide a robust estimator as well as the standard PS score used in robust
statistics~\cite{fujisawa08:_robus} instead of providing an efficient estimator.
Another important research direction is to improve the computational efficiency. 
Though the computation cost of local scores are much smaller than the MLE, 
the standard optimization solver is still insufficient for the statistical analysis of extremely massive data
using huge models. Developing efficient optimization algorithms is needed in the big-data era.

\appendix

\section{Derivation of \eqref{eqn:local-potential-Bregman}}
\label{appendix:deriv_potential-Bregman}

\begin{lemma}
\label{lemma:switch-index}
The equality 
\begin{align*}
 \sum_{x\in\mathcal{Y}}\sum_{y\in{b(x)}} A_{xy}=\sum_{x\in\mathcal{Y}}\sum_{y\in{b(x)}} A_{yx}
\end{align*}
holds for any $(A_{xy})_{x,y\in\mathcal{Y}}\in\Rbb^{|\mathcal{Y}|\times|\mathcal{Y}|}$. 
\end{lemma}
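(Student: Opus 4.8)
The plan is to read both sides as sums over the set of \emph{ordered} pairs of vertices joined by an edge, and then to use the symmetry of the undirected graph $G$. Concretely, I would introduce the set
\begin{align*}
 P=\{(x,y)\in\mathcal{Y}\times\mathcal{Y}\,|\,y\in b(x)\},
\end{align*}
and observe that the left-hand side is precisely $\sum_{(x,y)\in P}A_{xy}$ while the right-hand side is $\sum_{(x,y)\in P}A_{yx}$. The single structural input is that, because $E$ consists of unordered pairs and contains no loop $(y,y)$, we have $y\in b(x)\iff (x,y)\in E\iff (y,x)\in E\iff x\in b(y)$. Hence $P$ is invariant under the coordinate swap $\sigma(x,y)=(y,x)$, and $\sigma$ restricts to a bijection of $P$ onto itself.

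Given this, the proof is a one-line reindexing: in $\sum_{(x,y)\in P}A_{yx}$, substitute $(x,y)\mapsto\sigma(x,y)=(y,x)$; since $\sigma$ permutes $P$, the sum is unchanged as a sum but the summand $A_{yx}$ becomes $A_{xy}$, yielding $\sum_{(x,y)\in P}A_{xy}$, which is the left-hand side. Rewriting $\sum_{(x,y)\in P}$ back as the nested sum $\sum_{x\in\mathcal{Y}}\sum_{y\in b(x)}$ recovers the displayed identity.

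I do not expect any real obstacle here; the only point that deserves to be stated explicitly rather than glossed over is the equivalence $y\in b(x)\iff x\in b(y)$, which is exactly where the hypothesis that $G$ is undirected and loopless enters. Everything else is a routine manipulation of a finite double sum, so the write-up can be kept to a few lines.
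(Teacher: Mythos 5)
Your proof is correct and is essentially the same argument as the paper's: both hinge on the symmetry $y\in b(x)\iff x\in b(y)$ and then reindex the double sum (the paper via renaming dummy variables and indicator functions, you via an explicit swap bijection on the set of ordered adjacent pairs). No issues.
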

\begin{proof}
 Since $y\in b_x$ is equivalent with $x\in b_y$, we have 
 \begin{align*}
  \sum_{x\in\mathcal{Y}}\sum_{y\in{b(x)}} A_{xy}
  &=
  \sum_{y\in\mathcal{Y}}\sum_{x\in{b(y)}} A_{yx}
  =
  \sum_{x,y\in\mathcal{Y}}\1[x\in{b(y)}]A_{yx}
  =
  \sum_{x,y\in\mathcal{Y}}\1[y\in{b(x)}]A_{yx}
  =
  \sum_{x\in\mathcal{Y}}\sum_{y\in{b(x)}}A_{yx}. 
 \end{align*}
\end{proof}

The formula \eqref{eqn:local-potential-Bregman} is obtained by using Lemma~\ref{lemma:switch-index} as follows. 
\begin{align*}
 &\phantom{=} D_\phi(f,g) \\
 &=
 \sum_{y\in\mathcal{Y}}f_yS(y,g)+\phi(f) \\
 &=
 \sum_{y\in\mathcal{Y}} f_y
 \bigg\{
 -\phi_y(g_{b(y)}/g_y)
 +\sum_{z\in{b(y)}}\frac{g_z}{g_y}\partial_z\phi_y(g_{b(y)}/g_y)
 -\sum_{z\in{b(y)}}\partial_y\phi_z(g_{b(z)}/g_z)
 \bigg\}
 +\sum_{y\in\mathcal{Y}}f_y\phi_y(f_{b(y)}/f_y)\\
 &=
 \sum_{y\in\mathcal{Y}}f_y
 \bigg\{
 -\phi_y(g_{b(y)}/g_y)
 +\phi_y(f_{b(y)}/f_y)
 +\sum_{z\in{b(y)}}\frac{g_z}{g_y}\partial_z\phi_y(g_{b(y)}/g_y)
 \bigg\}
 -
 \sum_{y\in\mathcal{Y}}f_y\sum_{z\in{b(y)}}\partial_y\phi_z(g_{b(z)}/g_z)\\
 &= 
 \sum_{y\in\mathcal{Y}}f_y
 \bigg\{
 -\phi_y(g_{b(y)}/g_y)
 +\phi_y(f_{b(y)}/f_y)
 +\sum_{z\in{b(y)}}\frac{g_z}{g_y}\partial_z\phi_y(g_{b(y)}/g_y)
 \bigg\}
 -
 \sum_{y\in\mathcal{Y}}f_y \sum_{z\in{b(y)}}\frac{f_z}{f_y}\partial_z\phi_y(g_{b(y)}/g_y)\\
 &=
 \sum_{y\in\mathcal{Y}}f_y
 \bigg\{
 \phi_y(f_{b(y)}/f_y)
 -\phi_y(g_{b(y)}/g_y)
 -\sum_{z\in{b(y)}}\partial_z\phi_y(g_{b(y)}/g_y)
 \left(\frac{f_z}{f_y}-\frac{g_z}{g_y}\right)
 \bigg\}\\
 &=
 \sum_{y\in\mathcal{Y}}f_y D_{\phi_y}(f_{b(y)}/f_y,g_{b(y)}/g_y). 
\end{align*}

\section{Score of Composite Local Bregman Divergence}
\label{appendix:Score_Composite_Bregman_Divegence}
Let $\{\phi_y\}_{y\in\mathcal{Y}_0}$ be a set of local potentials. 
The associated composite local Bregman divergence is given as follows. 
\begin{align*}
 &\phantom{=}D_\phi(p,q) \\
 &= 
 \sum_{y\in\mathcal{Y}_0}p_y D_{\phi_y}(p_{b(y)}/p_y,q_{b(y)}/q_y) \\
 &= 
 \sum_{y\in\mathcal{Y}_0}p_y
 \bigg\{ 
 \phi_y(p_{b(y)}/p_y)-\phi_y(q_{b(y)}/q_y)-\sum_{z\in{b(y)}}\partial_z\phi_y(q_{b(y)}/q_y)\left(\frac{p_z}{p_y}-\frac{q_z}{q_y}\right)
 \bigg\}\\
 &= 
  \sum_{y\in\mathcal{Y}_0}p_y \phi_y(p_{b(y)}/p_y)
 -
  \sum_{y\in\mathcal{Y}_0}p_y \phi_x(q_{b(y)}/q_y)
 -
 \sum_{\substack{y\in\mathcal{Y}_0\\ z\in{b(y)}}}
 \partial_z\phi_y(q_{b(y)}/q_y)\left(p_z-p_y\frac{q_z}{q_y}\right)\\
 &=
 -
  \sum_{y\in\mathcal{Y}_0}p_y \phi_y(q_{b(y)}/q_y)
 -
 \sum_{\substack{y\in\mathcal{Y}_0\\ z\in{b(y)}}}p_z
 \partial_z\phi_y(q_{b(y)}/q_y)
 +
 \sum_{\substack{y\in\mathcal{Y}_0\\ z\in{b(y)}}}p_y\frac{q_z}{q_y} \partial_z\phi_y(q_{b(y)}/q_y)+\phi(p). 
\end{align*}
Therefore, the expected score is given by 
\begin{align*}
 S(p,q)=
 -
 \sum_{y\in\mathcal{Y}_0}p_y \phi_y(q_{b(y)}/q_y)
 -
 \sum_{\substack{y\in\mathcal{Y}_0\\ z\in{b(y)}}}p_z
 \partial_z\phi_y(q_{b(y)}/q_y)
 +
 \sum_{\substack{y\in\mathcal{Y}_0\\ z\in{b(y)}}}p_y\frac{q_z}{q_y} \partial_z\phi_y(q_{b(y)}/q_y). 
\end{align*}
Then, we obtain
\begin{align*}
 &\phantom{=}  S(p,q) \\
 &= 
 -
 \sum_{y\in\mathcal{Y}} p_y \1[y\in\mathcal{Y}_0]\phi_y(q_{b(y)}/q_y)
 -
 \sum_{y\in\mathcal{Y}} \sum_{z\in{b(y)}}
 \!\! \1[y\in\mathcal{Y}_0] p_z \partial_z\phi_y(q_{b(y)}/q_y)\\
 &\phantom{=}
 +
 \sum_{y\in\mathcal{Y}} p_y \1[y\in\mathcal{Y}_0]\sum_{z\in{b(y)}}
 \frac{q_z}{q_y} \partial_z\phi_y(q_{b(y)}/q_y) \\
 &=
 \sum_{y\in\mathcal{Y}}p_y\bigg\{
 \1[y\in\mathcal{Y}_0]\bigg(
 \sum_{z\in{b(y)}}\frac{q_z}{q_y}\partial_z\phi_y(q_{b(y)}/q_y)
 -\phi_y(q_{b(y)}/q_y)\bigg)
 -\sum_{z\in{b(y)}\cap\mathcal{Y}_0}\!\!\partial_y\phi_z(q_{b(z)}/q_z)
 \bigg\}. 
\end{align*}
Therefore, we have
\begin{align*}
 S(y,q)
& =
 \1[y\in\mathcal{Y}_0]\bigg(
 \sum_{z\in{b(y)}}\frac{q_z}{q_y}\partial_z\phi_y(q_{b(y)}/q_y)
 -\phi_y(q_{b(y)}/q_y)\bigg)
 -\sum_{z\in{b(y)}
 \cap\mathcal{Y}_0}\!\!\partial_y\phi_z(q_{b(z)}/q_z). 
\end{align*}
When the local potential is expressed as the additive form 
$\phi_y(f)=\sum_{z\in{b(y)}}\phi(f_z)$,
we have 
\begin{align*}
 S(y,q)
& =
 \1[y\in\mathcal{Y}_0]\cdot\bigg(
 \sum_{z\in{b(y)}}\frac{q_z}{q_y}\partial_z\sum_{w\in{b(y)}}\phi(q_w/q_y)
 -\sum_{z\in{b(y)}}\phi(q_z/q_y)\bigg)
 -\sum_{z\in{b(y)}\cap\mathcal{Y}_0}\!\!\partial_y\sum_{w\in{b(z)}}\phi(q_w/q_z)\\
 & =
  \1[y\in\mathcal{Y}_0]\cdot\bigg(
 \sum_{z\in{b(y)}}\frac{q_z}{q_y}\phi'(q_z/q_y)
 -\sum_{w\in{b(y)}}\phi(q_w/q_y)\bigg)
 -\sum_{z\in{b(y)}\cap\mathcal{Y}_0}\phi'(q_y/q_z)\\
 & =
 \sum_{z\in{b(y)}}
 \left\{
 \1[y\in\mathcal{Y}_0]
 \left(
 \frac{q_z}{q_y}\phi'(q_z/q_y)-\phi(q_z/q_y)
 \right)
 -
  \1[z\in\mathcal{Y}_0]\phi'(q_y/q_z)
 \right\}. 
\end{align*}
For $\mathcal{Y}_0=\mathcal{Y}$, we have 
\begin{align*}
 S(y,q)=\sum_{z\in{b(y)}}\psi(q_z/q_y), 
\end{align*}
where $\psi(r)= r\phi'(r)-\phi(r)  - \phi'(1/r),\,r\in\Rbb_{++}$.

\bibliographystyle{plain}

\end{document}